\documentclass[12pt,letterpaper]{article}
\usepackage[hebrew,english]{babel}

\usepackage{amsfonts,amssymb}
\usepackage{amsmath}


\numberwithin{equation}{section}

\newtheorem{theorem}{Theorem}

\newtheorem{definition}{Definition}
\newtheorem{remark}{Remark}

\newtheorem{lemma}{Lemma}

\newcommand{\qed}{\(\Box\)}
\newcommand{\N}{\mathbb{N}}
\newcommand{\B}{\mathbb{B}}

\newcommand{\Z}{\mathbb{Z}}
\newcommand{\F}{\mathbb{F}}

\def\deg{\hbox{\rm deg}\,}

\numberwithin{lemma}{section} \numberwithin{theorem}{section}
\numberwithin{prop}{section} \numberwithin{remark}{section}
\numberwithin{definition}{section} \numberwithin{corollary}{section}



%



%




\title{Polynomial largeness of sumsets and totally ergodic sets}
\author{Alexander Fish}
\begin{document}

\maketitle
\begin{abstract}
We prove that a sumset of a TE subset of \( \N\) (these sets can be viewed as ``aperiodic" sets) with a set of positive upper density intersects a set of  values of any polynomial with integer coefficients., i.e. for any \( A \subset \N \) a TE set, for any \( p(n) \in \Z[n]: \, \deg{p(n)} > 0, p(n) \to_{n \to \infty} \infty \) and any subset \( B \subset \N \) of positive upper density  we have \( R_p = A+B \cap \{ p(n) \, | \, n \in \N \}  \neq \emptyset\). 
For \( A \) a WM set (subclass of TE sets) we prove that \( R_p \) has lower density 1. In addition we obtain a generalization of the latter result to the case of several polynomials and several WM sets (see theorem \ref{theorem3}). 
\end{abstract}

\section{Introduction}

\noindent We start from the following question: Can we provide non-trivial examples of subsets  \( A \subset \N \) (density of A should be as small as we wish) such that for any \( B \subset \N \) of positive density the set \( A + B  \, \, (A+B = \{ a+b \, | \, a \in A, b \in B \}) \) intersects a set of values of any polynomial with integer coefficients with a positive leading coefficient? It means that \( \forall p(n) \in \Z[n] \) such that \( p(n) \to_{n \to \infty} \infty\) we have \( (A+B) \cap \{p(n) \, | \, n \in \N \} \neq \emptyset \).
We introduce a notion of a \textit{``p-good"} set (``p" stands for polynomials). A set \( A \subset \N \) is a 
 \textbf{p-good} if for every \( B \subset \N \) of positive upper density and every \( p(n) \in \Z[n], \, p(n) \to_{n \to \infty} \infty \) we have \( (A +B) \cap \{ p(n) | n \in \N \} \neq \emptyset \).

\noindent If we fix a polynomial \( p \) of degree greater or equal than \( 2 \) then for infinitely many primes
\( q \in P \) we have that the set of values \( \{p(n) \, | \, n \in \N \}\) projected on \( \mathbb{F}_p \) is not surjective. The latter follows from the fact that for a given polynomial \( p \in \Z[n] \) there are infinitely many primes \( q \) such that \( p(n) \) projected to \( \F_{q}[n] \) is splitting, see \cite{lenstra}. There are two possible cases. In the first case \( p(n) \in \F_{q}[n]\) has at least two different roots. Then it means that zero has at least two pre-images. So, the projection of    \( \{p(n) \, | \, n \in \N \}\)  on \( \F_p \)
is not surjective. In the second case, we have that \( p(n) \) covers  just all roots of degree \( d \), where \( d = \deg{p} \). We know that it can
not be more than \( \frac{q-1}{d} \) such numbers.

\noindent So for a fixed \( p(n) \in \Z[n] \) such that \( \deg{p} \geq 2 \) there are infinitely many primes \( q \)
 such that for every  congruence class \( A \) modulo \( q \)  there exists another congruence class \( B \)
 modulo \( q \)  with \( (A + B) \cap  \{p(n) \, | \, n \in \N \} = \emptyset \).

\noindent So, for a periodic set \( A \) we don't have any hope that for any \( B \subset \N \) of positive density  the set \( A + B \) intersects non-trivially a set of values of every polynomial.

\noindent The natural question is the following. If \( A \) does not
exhibit any periodicity (in dynamical context it is equivalent to
total ergodicity of \( A \)) does it follow that  \( A \) is p-good?
An answer to this question is affirmative. Before stating the
theorem one gives a formal definition of a \textit{TE set} and of \textit{WM set} (we will need this notion later).

\noindent We remind basic notions of ergodic theory: \textbf{measure
preserving system, generic point, ergodicity, total ergodicity and
weak mixing}.

\noindent
Let \( X \) be a compact metric space, \( \mathbb{B} \)  the Borel
\( \sigma\)-algebra on \( X \), \( T:X \rightarrow X \) be a
continuous map and \( \mu \) a probability measure on \(
\mathbb{B} \) such that for every \( B \in
\mathbb{B} \) we have \( \mu(T^{-1}B) = \mu(B) \).

\noindent The quadruple \( (X,\B,\mu,T)\) is called a \textbf{measure preserving system}.

\noindent For a compact metric space \( X \) we denote by \( C(X)
\) the space of continuous functions on \( X \) with the uniform
norm.
\begin{definition} Let \( (X,\mathbb {B},\mu ,T) \) be a measure
preserving system. A point \( \xi \in X \) is called
\textbf{generic} if for any \( f\in C(X) \) we have
\begin{equation}
\label{limit} \lim _{N\rightarrow \infty }\frac{1}{N}\sum
^{N-1}_{n=0}f(T^{n}\xi )=\int _{X}f(x)d\mu (x).
\end{equation}
\end{definition}


\noindent We recall the definitions of ergodic, totally ergodic  and weakly mixing measure preserving systems.
\begin{definition}
A measure preserving system \( (X,\mathbb{B},\mu,T) \) is called
\textbf{ergodic} if any measurable set \( B \in \B \) which is invariant under \( T \), i.e. \( T^{-1}B = B \) has measure \( 0 \) or \(1\).
\newline
A measure preserving system \( (X,\mathbb{B},\mu,T) \) is called
\textbf{totally ergodic} if for every \( n \in \N \) the system \( (X,\mathbb{B},\mu ,T^n) \) is ergodic.
\newline
A measure preserving system \( (X,\mathbb{B},\mu,T) \) is called
\textbf{weakly mixing} if  the system \( (X\times X,\mathbb{B}_{X
\times X},\mu \times \mu ,T\times T) \) is ergodic.

\end{definition}

\noindent
Let \( \xi (n) \) be any \( \{0,1\}- \)valued sequence. There is a
natural dynamical system \( (X_{\xi },T) \) connected to the
sequence \( \xi \):

\noindent On the  compact space \( \Omega
=\{0,1\}^{\mathbb {N}} \)  endowed with the Tychonoff topology,
we define a continuous map \( T:\Omega \longrightarrow \Omega  \)
by  \(
(T\omega )_{n}=\omega _{n+1} \). Now for any \( \xi \) in \(
\Omega \) we define \( X_{\xi } \) to be \(
\overline{(T^{n}\xi )_{n\in \mathbb {N}}} \subset \Omega\). 

\noindent Let \( S \) be a subset of \( \mathbb {N} \). Choose \(
\xi =1_{S} \) and assume that for an appropriate measure \( \mu \),
the point \( \xi \) is generic for \( (X_{\xi },\mathbb {B},\mu
,T) \). We can attach to the set \( S \) dynamical properties
associated with the system \( (X_{\xi },\mathbb {B},\mu ,T) \).
\newline
 \( S \) is called \textit{totally ergodic}  if the measure preserving system \( (X_{\xi },\mathbb
{B},\mu ,T) \) is totally ergodic.

\noindent
 \( S \) is called \textit{weakly mixing}  if the measure preserving system \( (X_{\xi },\mathbb
{B},\mu ,T) \) is weakly mixing.

\noindent We remind the notion of a density of a subset of \( \N\).
\begin{definition}
Let \( S \subset \mathbb{N} \). If the limit of \( \frac{1}{N}
\sum_{n=1}^{N} 1_S(n) \) exists
 as \( N \rightarrow \infty \) we call it the \textbf{density} of \( S \) and denote it by \( d(S) \).
\end{definition}

\begin{remark}
The upper and lower limits of the sequence \( \frac{1}{N}
\sum_{n=1}^{N} 1_S(n) \) always exist and they are called
\textbf{upper} \textnormal{(\( \overline{d}(S)\))} and, correspondingly,
\textbf{lower densities} \textnormal{(\( \underline{d}(S)\))} of \( S \).
\end{remark}

\noindent In our discussion of TE (WM) sets corresponding to totally
ergodic (weakly mixing) systems, we add the condition that the
density of a set (which exists) should be positive. Without making
this assumption any set of zero density would be in our class of
totally ergodic sets (weakly mixing sets). But a set of zero density
might be as bad as we like. Therefore we concerned only with sets of
positive density.

\begin{definition}
\label{def_TE_WM} A subset \( S \subset \mathbb{N} \) is called a
\textbf{TE set} (\textbf{WM set}) if \( S \) is totally ergodic
(weakly mixing) and the density of \( S \) is positive. That is to
say, \( 1_S \) is a generic point of the totally ergodic  (weakly
mixing) system \( (X_{1_S},\mathbb{B},\mu ,T) \) and \( d(S) > 0 \).
\end{definition}

\begin{remark}
Any WM set is a TE set.
\end{remark}

\noindent In the paper we prove that any TE set is p-good:
\begin{theorem}
\label{theorem1} Let \( A \subset \N \) be a TE set. Then for any \(
B \subset \N \) of positive upper density and any non-constant
polynomial \( p(n) \in \Z[n] \) with a positive leading coefficient
we have \( A+B \cap \{p(n) \, | \, n \in \N \} \neq \emptyset \).
Moreover, if density of \( B \) exists and positive then the set \(
R_p  = \{ n \in \N \, | \, p(n) \in A + B \}\) is syndetic (it has
bounded gaps).
\end{theorem}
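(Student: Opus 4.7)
The plan is to recast the statement in terms of the dynamical system $(X_A,\B,\mu,T)$ of which $\xi = 1_A$ is a generic point; the clopen cylinder $[1] = \{x \in X_A : x_0 = 1\}$ satisfies $\mu([1]) = d(A) > 0$. The relation $p(n) \in A + B$ is equivalent to the existence of $b \in B$ with $T^{p(n)-b}\xi \in [1]$, so membership in $R_p$ is controlled by polynomial orbits of the generic point $\xi$.

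The central input I would establish (or invoke) is polynomial equidistribution at the generic point: for every $p \in \Z[n]$ of positive degree with positive leading coefficient and every continuous $f \in C(X_A)$,
\[
\lim_{N \to \infty}\frac{1}{N}\sum_{n=1}^N f(T^{p(n)}\xi) = \int_{X_A} f\, \dd\mu.
\]
The proof splits $L^2(\mu)$ into the Kronecker factor (a compact abelian rotation whose eigenvalues, by total ergodicity, contain no roots of unity apart from $1$, so Weyl's theorem on polynomial equidistribution applies) and its orthogonal complement (on which polynomial averages vanish in $L^2$ by a Bergelson-type van der Corput argument); pointwise convergence at the specific $\xi$ is propagated from the $\mu$-a.e.\ statement using the genericity of $\xi$ and approximation of continuous functions by trigonometric polynomials on the Kronecker factor. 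Applying this with $f = 1_{[1]}$ and the polynomial $p(n) - b$ (which shares $p$'s positive leading coefficient) yields $\frac{1}{N}\sum_n 1_A(p(n) - b) \to d(A)$ for every integer $b$. Choosing any $b \in B$ — nonempty since $\overline{d}(B) > 0$ — produces infinitely many $n$ with $p(n) - b \in A$, hence $p(n) \in A + B$; this establishes the first assertion.

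For syndeticity under the hypothesis that $d(B)$ exists, I would show $R_p$ has positive lower Banach density, which for subsets of $\N$ coincides with bounded gaps. For any shift $M_0 \geq 0$, the substitution $n = M_0 + k$ replaces the polynomial by $\tilde{p}_{M_0}(k) = p(M_0+k)$, which has the same degree and leading coefficient as $p$. Since the Weyl bounds on the Kronecker factor and the $L^2$ polynomial vanishing on its orthogonal complement depend only on the leading coefficient, the polynomial equidistribution at $\xi$ for $\tilde{p}_{M_0}$ holds with rate uniform in $M_0$. Consider the double average
\[
\frac{1}{MN}\sum_{n=M_0+1}^{M_0+N}\sum_{b=0}^{M-1} 1_B(b)\,1_A(p(n)-b).
\]
Letting $N \to \infty$ via the uniform polynomial equidistribution, then $M \to \infty$ via the existence of $d(B)$, the double limit equals $d(A)\,d(B)$ independently of $M_0$. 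Since the inner sum over $b$ is bounded by $|B \cap [0,M)| \sim M\,d(B)$ and is strictly positive precisely when $n \in R_p$, a standard counting inequality gives $|R_p \cap (M_0, M_0+N]|/N \geq d(A) - o(1)$ uniformly in $M_0$, yielding positive lower Banach density and therefore syndeticity. The main obstacle is the uniform (in $M_0$) polynomial equidistribution at the specific generic $\xi$, beyond the $\mu$-a.e.\ statement; this rests on the invariance of the leading coefficient of $p$ under $n \mapsto M_0 + n$, which keeps the controlling Weyl and ergodic estimates independent of $M_0$.
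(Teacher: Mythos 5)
Your central claim, pointwise polynomial equidistribution at the specific generic point $\xi$, is false, and the gap is structural rather than technical. You assert that for every nonconstant $p\in\Z[n]$ with positive leading coefficient and every integer $b$,
\[
\frac{1}{N}\sum_{n=1}^{N} 1_A\bigl(p(n)-b\bigr)\longrightarrow d(A).
\]
But the class of TE sets is stable under removal of a density-zero set (the paper uses exactly this observation in the proof of Remark \ref{remark_dif_at_least_two}: the orbit closure may change, but the measure for which $1_A$ is generic does not, and total ergodicity is a property of the measure). So start from any TE set $A_0$, fix $p$ and $b$, and set $A = A_0\setminus\{p(n)-b : n\in\N\}$. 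Then $A$ is again a TE set with $d(A)=d(A_0)>0$, yet $\frac{1}{N}\sum_{n\le N}1_A(p(n)-b)=0$ for every $N$. This contradicts the asserted limit for that $b$; since your argument then proceeds by picking a single $b\in B$, the proof of the first assertion collapses (and the syndeticity argument, which needs the even stronger uniform-in-$M_0$ version, collapses a fortiori).

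The underlying reason is that genericity of $\xi$ controls only Cesàro averages along consecutive integers; it gives no control over $\frac{1}{N}\sum_n f(T^{p(n)}\xi)$. The Kronecker decomposition you invoke is an $L^2(\mu)$ decomposition: the eigenfunctions and the conditional expectation onto the Kronecker factor are defined only modulo $\mu$-null sets, and the exceptional set of initial points for which polynomial orbits fail to equidistribute can perfectly well contain the particular $\xi$ — the example above shows that it does. Approximating continuous $f$ by ``trigonometric polynomials on the Kronecker factor'' is $L^2$-approximation, which says nothing about the values at $T^{p(n)}\xi$.

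The paper is designed precisely to avoid this pitfall, and its route is genuinely different. Lemma \ref{shift_ort} does not assert pointwise equidistribution along a polynomial; it bounds
\[
\Bigl\Vert \frac{1}{J}\sum_{j=1}^{J}\xi\bigl(p(N+j)-n\bigr)\Bigr\Vert_{p(N)}
\]
in the normalized $L^2$ norm over the translate variable $n\in\{1,\dots,p(N)\}$, while the polynomial enters only as the shift $p(N+j)$ with $j$ running over the short window $[1,J]$. Van der Corput reduces the degree of the polynomial shift, and at the bottom one is left with averages along consecutive integers, where genericity applies and total ergodicity (via von Neumann) forces the limit to vanish. The conclusion is that the set of bad translates $b$ is $L^2$-small — which is exactly what the theorem needs, since $B$ has positive upper density and Cauchy–Schwarz against $1_B$ produces the contradiction. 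That statement is about \emph{almost all} translates, so it is consistent with the counterexample above; your pointwise-for-every-$b$ statement is not. If you want to salvage your approach, you must replace pointwise polynomial equidistribution by an $L^2$-over-translates estimate of this kind, and at that point you have essentially rediscovered Lemma \ref{shift_ort}.
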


\noindent If we require from \( A \) to be WM set, then we can prove
that the set \( R_p \) is of lower Banach density 1. We remind the definition of lower Banach density.

\begin{definition}
Let \( B \subset \N \). Lower Banach density of \(B \), denoted by \( d_{*}(B) \) is
\[
d_{*}(B) = \liminf_{b-a \to \infty; a,b \in \N} \frac{\left| B \cap [a,b]  \right|}{b-a+1}.
\]
\end{definition}

\begin{theorem}
\label{theorem2} Let \( A \subset \N \) be a WM set, let \( B
\subset \N \) of positive upper density and let \( p(n) \in \Z[n]:
p(n) \to_{n \to \infty} \infty\) . Then the set \( R_p = \{ n \in
\N \, | \, p(n) \in A + B \}\) is of lower Banach density 1.
\end{theorem}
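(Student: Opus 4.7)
The plan is to argue by contradiction: assume $d^*(R_p^c)\ge\delta>0$ (equivalently, $d_*(R_p)<1$), and derive a contradiction from the weak mixing of $A$. Let $(X,\B,\mu,T)$ be the WM system for which $1_A\in X\subset\{0,1\}^{\N}$ is generic (so $T$ is the left shift), $\alpha:=d(A)=\mu(\{x:x_0=1\})>0$, and write $A^c$ for the cylinder $\{x:x_0=0\}$. For a truncation parameter $M\in\N$ set $B_M:=B\cap[1,M]$ and define the continuous function
\[
G_M(x)\;:=\;\prod_{b\in B_M}\bigl(1-x_{M-b}\bigr)\;\in\;C(X).
\]
If $n\in R_p^c$ and $p(n)\ge M$, then $1_A(p(n)-b)=0$ for every $b\in B_M$, i.e.\ $G_M(T^{p(n)-M}1_A)=1$. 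Consequently, for any interval $I$ of sufficiently large integers,
\[
|R_p^c\cap I|\;\le\;\sum_{n\in I}G_M(T^{p(n)-M}1_A)+O(M^{1/\deg p}).
\]

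Suppose there exist intervals $I_k=[a_k,a_k+L_k-1]$ with $L_k\to\infty$ and $|R_p^c\cap I_k|\ge(\delta/2)L_k$, and form the empirical measures $\nu_k:=L_k^{-1}\sum_{n\in I_k}\delta_{T^{p(n)-M}1_A}$ on $X$. By compactness, pass to a subsequence so that $\nu_k\rightharpoonup\nu$ weak-$*$. The key step is to identify $\nu=\mu$, via the polynomial ergodic theorem for WM systems (Bergelson) applied at the specific generic point $1_A$ along arbitrary long intervals. Granting this, $\delta/2\le\int G_M\,d\nu_k+o(1)\to\int G_M\,d\mu$ for every $M$.

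It then remains to show $\int G_M\,d\mu\to 0$ as $M\to\infty$. A direct computation gives $\int G_M\,d\mu=\mu\bigl(\bigcap_{c\in M-B_M}T^{-c}A^c\bigr)$, with $M-B_M\subset[0,M-1]$ of size $|B_M|\ge(\beta/2)M$ for $M$ along a subsequence (with $\beta:=\overline d(B)>0$). Weak mixing yields
\[
\frac{1}{M^k}\sum_{(n_1,\ldots,n_k)\in[0,M-1]^k}\mu\!\Bigl(\bigcap_{i=1}^k T^{-n_i}A^c\Bigr)\;\longrightarrow\;(1-\alpha)^k,
\]
with vanishing $L^2$-variance in the $k$-tuple. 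So all but a vanishing fraction of $k$-tuples in $[0,M-1]^k$ have correlation within $\varepsilon$ of $(1-\alpha)^k$; since $(M-B_M)^k$ occupies a fraction $\ge(\beta/2)^k$ of $[0,M-1]^k$, for $M$ large one can select $b_1,\ldots,b_k\in M-B_M$ with $\mu\bigl(\bigcap_i T^{-b_i}A^c\bigr)\le(1-\alpha)^k+\varepsilon$. Monotonicity of $\mu$ gives $\int G_M\,d\mu\le(1-\alpha)^k+\varepsilon$, which for $k$ large is $<\delta/2$: contradiction.

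The main obstacle is the identification $\nu=\mu$: the usual polynomial ergodic theorem for WM systems gives only $L^2$ convergence along the averages over $[1,N]$, whereas the present Banach-density statement requires pointwise convergence at the specific generic point $1_A$ along arbitrary long intervals $[a_k,a_k+L_k-1]$. Resolving this should use a van der Corput argument exploiting $p(n+h)-p(n)\to\infty$ together with the weak-mixing of $T$ to decorrelate, yielding that any weak-$*$ limit of the polynomial-orbit empirical measures at $1_A$ coincides with $\mu$.
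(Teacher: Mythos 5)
Your reduction $\int G_M\,d\mu\to 0$ is fine (the variance computation over $k$-tuples uses ergodicity of $T\times T$, i.e.\ weak mixing, correctly), and the truncation device $G_M\in C(X)$ is a reasonable way to bring $B$ in. But the obstacle you flag at the end --- identifying the weak-$*$ limit $\nu$ with $\mu$ --- is not a technical loose end; it is a genuine gap, and the van der Corput fix you suggest does not close it. Genericity of $1_A$ gives convergence only of Ces\`aro averages $\frac{1}{N}\sum_{n=1}^{N}f(T^n 1_A)\to\int f\,d\mu$; it says nothing about averages over arbitrary intervals $I_k$, nor about averages along a polynomial orbit $n\mapsto T^{p(n)-M}1_A$. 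Even for $p(n)=n$, the assertion $\nu=\mu$ would amount to uniform genericity of $1_A$ for every cylinder function, i.e.\ unique ergodicity of the subshift $X_{1_A}$, which is false for typical WM sets. For $\deg p\ge 2$ it is worse: when you try to run van der Corput on $S_k(f)=L_k^{-1}\sum_{n\in I_k}f(T^{p(n)-M}1_A)$, the differenced sum is $L_k^{-1}\sum_{n}f(x_n)\,\overline{f(T^{p(n+h)-p(n)}x_n)}$ with $x_n=T^{p(n)-M}1_A$ still moving along the polynomial orbit, so the correlations you need to bound are of exactly the same pointwise type as $S_k(f)$ itself. Weak mixing gives decay of $\langle T^m f,f\rangle$ in density over $m$, not a bound uniform over the specific $m=p(n+h)-p(n)$ and over the unknown points $x_n$, so the induction never lands on something that genericity controls.

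What the paper does instead is structurally different and is precisely designed to avoid this. Rather than a single sum over an interval, it forms a double average
\[
\frac{1}{p(N)}\sum_{n=1}^{p(N)}\ \frac{1}{J}\sum_{j=1}^{J} a_{N+j}\,\xi\bigl(p(N+j)-n\bigr)\,(\cdots),
\]
where the outer (spatial) sum over $n$ runs from $1$ to $p(N)$ and the inner (time) sum runs over a short window $j\in[1,J]$ with $J/N\to 0$, the window being chosen so that $N+j$ sweeps an interval where the bad set has density; the weight $a_{N+j}=1_F(N+j)$ carries the Banach-density information. The van der Corput lemma is applied in the Hilbert space $L^2(\{1,\dots,p(N)\})$, so after differencing and a change of variable $n\mapsto n-g_1(N+j)$ one is left with Ces\`aro averages in $n$ of shifts of $\xi$ --- exactly where genericity of $1_A$ applies --- and weak mixing (Lemma~\ref{wm_sub_lemma1}, and Host--Kra for the multiparameter base case) makes those small. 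In short, the paper's decomposition deliberately puts the Ces\`aro average on the inside so that van der Corput reduces to quantities controlled by genericity, whereas in your scheme van der Corput keeps producing pointwise sums along the orbit with nothing to control them. To salvage your approach you would essentially have to reintroduce that inner spatial average, at which point you would be reproving Lemma~\ref{main_lemma} and the paper's Claims~1--3.
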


\noindent  We can generalize the result of theorem \ref{theorem2} and to prove the similar result for a number of different WM sets and different polynomials which have the same degree. Before stating the result we remind the notion of \textbf{essentially distinct} polynomials.

\begin{definition}
The polynomials \( \{p_1,\ldots,p_n \in \Z[n]\)\} are called \textbf{essentially distinct} if for every \( 1 \leq i < j \leq n \) we have \( p_i - p_j \) is a non-constant polynomial.
\end{definition}
All polynomials \( p(n) \) that we consider satisfy \( p(n) \to_{n \to \infty} \infty\).

\begin{theorem} 
\label{theorem3}
Let \( A \subset \N \) be a WM set, let \( p_1(n), \ldots,p_k(n) \in Z[n] \) be essentially distinct polynomials of the same degree, let \( B \subset \N \) of positive upper density. 
Then the set 
\[ R_{p_1,\ldots,p_k} = \{ n \in \N \, | \, \exists b \in B: \, p_1(n),  p_2(n), \ldots, p_k(n) \in A + b \} \] 
has lower Banach density 1.
\end{theorem}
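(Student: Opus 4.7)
The plan is a second-moment (variance) argument controlled by Bergelson's polynomial ergodic theorem for weakly mixing systems. Since $A$ is a WM set, $\xi = 1_A$ is generic for a weakly mixing system $(X_\xi, \B, \mu, T)$; let $C = \{\omega \in X_\xi : \omega_0 = 1\}$, a clopen set with $\mu(C) = d(A) > 0$, so ``$p_i(n) - b \in A$'' is equivalent to ``$T^{p_i(n)-b}\xi \in C$''. For a parameter $M \in \N$, set
\[
F_M(n) = \sum_{b \in B \cap [1,M]} \prod_{i=1}^{k} 1_A\bigl(p_i(n) - b\bigr),
\]
so that $n \in R_{p_1,\ldots,p_k}$ if and only if $F_M(n) > 0$ for some $M$. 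Over an arbitrary interval $I \subset \N$ of length $L$ I apply Cauchy--Schwarz in the form
\[
|\{n \in I : F_M(n) > 0\}| \ge \frac{\bigl(\sum_{n \in I} F_M(n)\bigr)^{2}}{\sum_{n \in I} F_M(n)^{2}},
\]
and aim to show the right-hand side is $L(1-\epsilon)$ for any $\epsilon>0$ by choosing $M$ large and then $L$ large.

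For the first moment I invoke Bergelson's theorem: for essentially distinct $p_1, \ldots, p_k \in \Z[n]$ of positive degree and $f_1, \ldots, f_k \in L^\infty(\mu)$,
\[
\frac{1}{L} \sum_{n \in I} \prod_{i=1}^{k} T^{p_i(n)} f_i \longrightarrow \prod_{i=1}^{k} \int f_i\, d\mu \quad \text{in } L^2(\mu),
\]
with the PET/van der Corput proof giving convergence uniform in the starting point of $I$ as $L \to \infty$. At the generic point $\xi$ with $f_i = 1_C$ (continuous because $C$ is clopen), this specialises to $\frac{1}{L} \sum_{n \in I} \prod_i 1_A(p_i(n) - b) \to d(A)^k$ uniformly in $b$. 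Summing over $b \in B \cap [1, M]$ and selecting $M$ along a subsequence with $|B \cap [1,M]|/M \to \overline{d}(B) > 0$ yields $\sum_{n \in I} F_M(n) \sim L \cdot M \overline{d}(B)\, d(A)^{k}$.

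The second moment is the crux; it rests on the identity
\[
1_A(p_i(n) - b_1)\, 1_A(p_i(n) - b_2) = 1_{C \cap T^{b_2 - b_1} C}\bigl(T^{p_i(n) - b_1} \xi\bigr),
\]
which absorbs the constant shift $b_2 - b_1$ into the test function, so that Bergelson's theorem can be applied to the \emph{same} essentially distinct family $\{p_i\}$ with new integrand $1_{C \cap T^{b_2 - b_1} C}$, producing inner Cesàro limit $\mu(C \cap T^{b_2 - b_1} C)^k$. This manoeuvre is essential, as the mixed family $\{p_i(n)-b_1, p_j(n)-b_2\}_{i,j}$ is \emph{not} essentially distinct whenever $i=j$. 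Averaging over $(b_1,b_2) \in (B \cap [1,M])^2$ via the weak mixing estimate $\frac{1}{H}\sum_{|h|\le H}|\mu(C \cap T^h C)-\mu(C)^2| \to 0$ gives $\sum_{n \in I}F_M(n)^2 \sim L \cdot M^2 \overline{d}(B)^2 d(A)^{2k}$ (the $b_1 = b_2$ diagonal contributing only $O(LM)$). Cauchy--Schwarz then produces $|\{n \in I:F_M(n)>0\}|/L \ge 1 - O(1/M)$ uniformly in $I$; letting $M \to \infty$ gives $R_{p_1,\ldots,p_k}$ lower Banach density $1$. The principal obstacles are (i) establishing the uniform-in-starting-point version of Bergelson's theorem at the generic point $\xi$ (standard from the PET/van der Corput structure of the proof, but delicate in the symbolic setting), and (ii) the weak-mixing double average $\frac{1}{M^2}\sum_{b_1,b_2\in B\cap[1,M]}\mu(C \cap T^{b_2-b_1}C)^k \to \overline{d}(B)^2 d(A)^{2k}$, which combines the WM property with a Cesàro argument on $B\times B$.
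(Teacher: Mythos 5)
Your Cauchy--Schwarz / second-moment organization is genuinely different from the paper's, which argues by contradiction: it assumes the exceptional set \( F \) has positive upper Banach density, localizes to intervals where \( 1_F \) has density bounded below, and derives incompatible upper and lower bounds on a suitably weighted average \( B_{N,J} \) (Claims 1--3 in the proof of Theorem~\ref{theorem3}). Your observation that the mixed family \( \{p_i(n)-b_1,\,p_i(n)-b_2\}_i \) is not essentially distinct, and that the constant offset \( b_2-b_1 \) must instead be absorbed into a new test set \( C\cap T^{b_2-b_1}C \), is the right move; the paper handles the analogous collision through the change of variable inside its van der Corput step.

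The genuine gap is at ``At the generic point \( \xi \) \ldots this specialises to \ldots''. Bergelson's polynomial weak-mixing theorem gives convergence of \( \frac{1}{L}\sum_{n\in I}\prod_i T^{p_i(n)}f_i \) to \( \prod_i\int f_i\,d\mu \) in \( L^2(\mu) \). That is a Hilbert-space statement; it does not specialise to a single (measure-zero) point, and genericity of \( \xi \) only guarantees \( \frac{1}{N}\sum_{n\le N} g(T^n\xi)\to\int g\,d\mu \) for a single \( g\in C(X) \) along windows anchored at \( 1 \) --- it says nothing about nonlinear multi-correlations evaluated along the orbit of \( \xi \), nor about uniformity over starting points, nor about uniformity over the shift \( b \). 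The estimate your two moments actually need --- that \( \frac{1}{L}\sum_{n\in I}\prod_i 1_A(p_i(n)-b)\to d(A)^k \) uniformly over the window \( I \) and over \( b\in[1,M] \) --- has to be proved by carrying the PET/van der Corput induction out directly on the symbolic sequence \( \xi \) itself, invoking genericity at each stage to identify the arising limits. This is exactly the content, and the bulk of the work, of the paper's Lemmas~\ref{first_orth_lemma} and~\ref{main_lemma}. Characterizing it as a ``standard but delicate'' refinement of Bergelson's theorem substantially understates what is required; without a generic-point orthogonality lemma of that type, your first- and second-moment computations have no foundation.
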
 

\begin{remark}
Any element \( n \in R_{p_1,\ldots,p_k} \) corresponds to a solution of the equation:
\begin{equation}
\label{additive_system} \left\{ \begin{array}{llll} x+y_1=p_1(n) \\
x+y_2 = p_2(n) \\
\ldots \\
x+y_k=p_k(n)
\end{array} \right.
\end{equation}
where \( x \in B, y_1, \ldots, y_k \in A \).
\end{remark}

\noindent There is an easy case which shows the necessity of some
restrictions on the degrees of the polynomials;  namely, when  there are two polynomials with
degrees which differ by at least two.

\begin{remark}
\label{remark_dif_at_least_two}
 \textnormal{If among \( p_1(n),\ldots,p_k(n) \)  there are two polynomials with degrees
which differ by at least two, then there exists a  WM set \( A \)  such that 
the set 
\[
R_{p_1,\ldots,p_k} = \{ n \in \N \, | \, \exists a \in A: \, p_1(n), p_2(n) , \ldots, p_k(n) \in A + b \}
\]
is empty.}
\end{remark}
\begin{proof}
We take an arbitrary WM set \( A \); then removing a set of
density zero from \( A \) leads again to a WM set (see definition \ref{def_TE_WM}). In particular, we can exclude from \( A \) all
solutions of the system
(\ref{additive_system}) by removing a set of density zero.
Namely, if \( \deg{p_1} \leq \deg{p_2} - 2 \) then replace \(
A \) by
\[
A' = A \setminus \left( \bigcup_{n \in \mathbb{N}}
[p_2(n)-p_1(n),p_2(n)] \right)
\]
which is again a WM set. (For sufficiently large \( n \) the polynomials \(
p_1(n), p_2(n) \) are monotone.) Within \( A' \) the system
(\ref{additive_system}) is unsolvable.

\hspace{12cm} \qed
\end{proof}

\noindent In the next sections we prove theorems \ref{theorem1} and \ref{theorem3}.

\noindent \textbf{Acknowledgment:} This work was done during my studies in Hebrew University of Jerusalem. I would like to thank my Ph.D. supervisor Hillel Furstenberg who introduced me to the subject of ergodic theory and proposed me the problem. I would like to thank Vitaly Bergelson for  fruitful discussions.

\section{Proof of theorem \ref{theorem1}}

\noindent Let \( A \) be a totally ergodic set (we don't require that density of \( A \) is positive). We introduce the normalized totally ergodic  sequence \( \xi \in \{-d(A), 1-d(A)\}^{\N}\) (\( d(A) \) is density of \( A \) ):  \( \xi(n) = 1_A(n) - d(A) \). Let \( p(n) \in \Z[n]: \deg{p} > 0, \, 
p(n) \to_{n \to \infty} \infty \). 

\noindent We use the following
\newline
\textbf{Notation:} \textit{The Hilbert space \( L^2(N) \) is the space of all real-valued
functions on the finite set \( \{1,2,\ldots,N\}\) endowed with the
following scalar product:
\[
\left<u,v\right>_{N} = \frac{1}{N} \sum_{n=1}^N u(n)v(n).
\]
We denote by \( \left\Vert u \right\Vert_{N} = \sqrt{\left<u,u\right>_{N}}\).}

\noindent The key tool to prove theorem \ref{theorem1} is the following lemma.

\begin{lemma}
\label{shift_ort}
For every \( \varepsilon > 0 \) there exists \( J(\varepsilon) \) such that for every \( J \geq J(\varepsilon) \) there exists \( N(J,\varepsilon) \) such that for every \( N \geq N(J,\varepsilon) \) we have 
\[
\left \Vert \frac{1}{J} \sum_{j=1}^J \xi(p(N+j) - n) \right \Vert_{p(N)} < \varepsilon.
\]
\end{lemma}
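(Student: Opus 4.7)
The plan is to expand the squared norm as a double sum over pairs of shifts, identify each cross term with a dynamical correlation, and then use the spectral theorem together with Weyl's inequality for polynomial exponential sums.

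Squaring and substituting $m = p(N+j_{1}) - n$ in each cross term yields
\[
\left\Vert \tfrac{1}{J}\sum_{j=1}^{J}\xi(p(N+j)-n)\right\Vert_{p(N)}^{2}
= \frac{1}{J^{2}}\sum_{j_{1},j_{2}=1}^{J}\frac{1}{p(N)}\sum_{m=p(N+j_{1})-p(N)}^{p(N+j_{1})-1}\xi(m)\xi(m+h),
\]
where $h = p(N+j_{2})-p(N+j_{1})$. For each fixed pair $(j_{1},j_{2})$, I would write the window average as the difference of two full Cesàro averages $\tfrac{1}{M}\sum_{m=1}^{M}\xi(m)\xi(m+h)$ with $M = p(N+j_{1})-1$ and $M = p(N+j_{1})-p(N)-1$, respectively. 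Because $p(N+j_{1})/p(N) \to 1$ and $(p(N+j_{1})-p(N))/p(N) \to 0$ as $N\to\infty$, genericity of $\xi$ for the underlying totally ergodic system identifies the limit with the correlation $c(h) := \int f\cdot(f\circ T^{h})\,d\mu$, where $f(\omega)=\omega_{0}$ is the canonical mean-zero coordinate function.

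The spectral theorem then gives $c(h)=\int_{\mathbb{T}}e^{2\pi i h\theta}\,d\sigma_{f}(\theta)$, so summing over $(j_{1},j_{2})$,
\[
\frac{1}{J^{2}}\sum_{j_{1},j_{2}=1}^{J}c\bigl(p(N+j_{2})-p(N+j_{1})\bigr)
= \int_{\mathbb{T}}\left|\frac{1}{J}\sum_{j=1}^{J}e^{2\pi i p(N+j)\theta}\right|^{2}d\sigma_{f}(\theta).
\]
Total ergodicity is used precisely here: $\sigma_{f}$ has no atoms at any rational $\theta$, since any root-of-unity eigenvalue of $T$ would violate ergodicity of an appropriate power $T^{q}$ (and $f$ is already orthogonal to the constants). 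Viewed as a polynomial in $j$, $p(N+j)$ has leading coefficient $a_{d}$ independent of $N$, so Weyl's inequality bounds the inner exponential sum by a quantity depending only on $J$, $\deg p$, and the Diophantine type of $a_{d}\theta$. Consequently, for every irrational $\theta$ the Weyl sum tends to $0$ as $J\to\infty$ uniformly in $N$, and dominated convergence (against the integrable constant $1$, using that $\sigma_{f}$ is concentrated on irrationals) gives $\lim_{J\to\infty}\sup_{N}\int_{\mathbb{T}}|S_{J}(\theta,N)|^{2}\,d\sigma_{f}(\theta) = 0$.

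Given $\varepsilon>0$, one picks $J(\varepsilon)$ large enough that this integral is $<\varepsilon^{2}/2$ for every $N$, and for each $J \geq J(\varepsilon)$ one picks $N(J,\varepsilon)$ large enough that the $J^{2}$ pair-by-pair window-average approximations to $c\bigl(p(N+j_{2})-p(N+j_{1})\bigr)$ together contribute less than $\varepsilon^{2}/2$. The main technical obstacle is exactly this last approximation step: because $h = p(N+j_{2})-p(N+j_{1}) = O(N^{d-1})$ itself grows with $N$, standard genericity — which guarantees $\tfrac{1}{M}\sum_{m=1}^{M}\xi(m)\xi(m+h)\to c(h)$ only for each fixed $h$ — does not immediately apply. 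The resolution is that once $J$ is fixed only finitely many polynomial shift sequences are in play, and pair by pair the window length $p(N) = \Theta(N^{d})$ dominates the shift size $O(N^{d-1})$, letting genericity of $\xi$ be invoked for each specific $h = p(N+j_{2})-p(N+j_{1})$ as $N\to\infty$.
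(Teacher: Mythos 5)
Your route --- square the norm, identify cross terms with correlations $c(h)$, pass to the spectral measure $\sigma_f$, and kill the resulting exponential sum by Weyl equidistribution plus the absence of rational atoms --- is a genuinely different strategy from the paper's, which instead feeds the finite averages through a finitary van der Corput lemma and an induction on the degree of the shift polynomial (lemma \ref{imp_lemma}, Bergelson's PET scheme). The Weyl half of your argument is sound: viewed as a polynomial in $j$, $p(N+j)\theta$ has leading coefficient $a_d\theta$ independent of $N$, so Weyl's inequality bounds $S_J(\theta,N)=\frac{1}{J}\sum_{j=1}^J e^{2\pi i p(N+j)\theta}$ uniformly in $N$, vanishing as $J\to\infty$ for every irrational $\theta$; dominated convergence against $\sigma_f$, which has no rational atoms by total ergodicity, then yields $\sup_N\int_{\mathbb{T}}|S_J(\theta,N)|^2\,d\sigma_f(\theta)\to 0$.

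The gap is the step that replaces the finite window average by $c(h)$, and the resolution you offer does not close it. For $\deg p\geq 2$ the shift $h=p(N+j_2)-p(N+j_1)$ grows like $N^{d-1}$, so the phrase ``invoke genericity for each specific $h$ as $N\to\infty$'' is vacuous: $h$ is a function of $N$ and cannot be frozen while $N\to\infty$. What you would actually need is uniformity of the correlation averages over $h$, namely
\[
\sup_{h\in\mathbb{N}}\left|\frac{1}{M}\sum_{m=1}^{M}\xi(m)\xi(m+h)-c(h)\right|\longrightarrow 0 \quad\text{as } M\to\infty,
\]
and genericity of a single point supplies only pointwise-in-$h$ convergence, not this uniform statement (it would follow from, say, unique ergodicity of the product system, which is not assumed). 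The observation that the window length $p(N)$ dominates the shift size $O(N^{d-1})$ does not help; the obstruction is not that the window is too short relative to $h$, but that the target $c(h)$ itself moves with $N$. For $\deg p=1$ the shifts $h$ are independent of $N$ and take only finitely many values, so your argument does work --- consistent with the paper, which also disposes of the linear case directly from total ergodicity. Without the missing uniformity, you are essentially driven back to the van der Corput differencing that the paper's proof uses.
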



\noindent A proof of lemma \ref{shift_ort} relies on a standard technique introduced by V. Bergelson in his paper \cite{berg_pet}. A main ingredient is a finitary version of van der Corput lemma. At this stage we need the following simplified version of lemma \ref{vdrCorput}.

\begin{lemma}
\label{simple_corput}
Let \( \{u_{j}\}_{j=1}^{\infty } \) be a family of bounded vectors in a Hilbert space and let
\( \varepsilon >0 \). There exist \( J(\varepsilon), I(\varepsilon) \) such that If
\[
\left| \frac{1}{J}\sum ^{J}_{j=1}<u_{j}, u_{j+i}>\right| < \frac{\varepsilon}{2} \]
holds for
 \(J \geq J(\varepsilon) \) and every  \( 1\leq i\leq I(\varepsilon) \)
then
\[
\left\Vert \frac{1}{J}\sum _{j=1}^{J}u_{j}\right\Vert < \varepsilon.
\] 
\end{lemma}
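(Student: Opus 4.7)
The plan is the standard finitary van der Corput / Cauchy--Schwarz averaging trick in Hilbert space. Fix a uniform bound $M$ with $\|u_{j}\|\le M$ for all $j$ (the word ``bounded'' is read as providing such an $M$). For any positive integers $I\le J$, I would compare the target average $S_{J}:=\frac{1}{J}\sum_{j=1}^{J}u_{j}$ with the double average
\[
T_{I,J}\ :=\ \frac{1}{J}\sum_{j=1}^{J}\frac{1}{I}\sum_{i=1}^{I}u_{j+i}.
\]
Swapping the order of summation and grouping by $k:=j+i$ shows that $T_{I,J}$ agrees with $S_{J}$ except at $O(I)$ boundary indices, so $\|S_{J}-T_{I,J}\|\le 2IM/J$.

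Next I would apply the Cauchy--Schwarz inequality to the outer average in $T_{I,J}$ and expand the squared norm:
\[
\|T_{I,J}\|^{2}\ \le\ \frac{1}{J}\sum_{j=1}^{J}\Bigl\|\frac{1}{I}\sum_{i=1}^{I}u_{j+i}\Bigr\|^{2}\ =\ \frac{1}{I^{2}}\sum_{i,i'=1}^{I}\frac{1}{J}\sum_{j=1}^{J}\langle u_{j+i},u_{j+i'}\rangle.
\]
The $I$ diagonal pairs ($i=i'$) contribute at most $M^{2}/I$ to the right hand side. For an off-diagonal pair with $i<i'$, the substitution $j\mapsto j-i$ in the inner sum converts it into $\frac{1}{J}\sum_{j}\langle u_{j},u_{j+k}\rangle$ with $k:=i'-i\in\{1,\dots,I-1\}$, up to a boundary error of size $O(IM^{2}/J)$; by the hypothesis of the lemma each such correlation is bounded by $\varepsilon/2$ as soon as $J\ge J(\varepsilon)$ and $I-1\le I(\varepsilon)$.

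Combining the pieces,
\[
\|T_{I,J}\|^{2}\ \le\ \frac{M^{2}}{I}\ +\ \frac{\varepsilon}{2}\ +\ O\!\left(\frac{IM^{2}}{J}\right).
\]
To finish, one selects the constants in the required order: first fix an integer $I(\varepsilon)>8M^{2}/\varepsilon^{2}$ so that $M^{2}/I<\varepsilon^{2}/8$, and then choose $J(\varepsilon)$ large enough that both $\|S_{J}-T_{I,J}\|\le 2IM/J<\varepsilon/4$ and the boundary error $O(IM^{2}/J)<\varepsilon^{2}/8$ for every $J\ge J(\varepsilon)$. Applying the triangle inequality $\|S_{J}\|\le\|T_{I,J}\|+\|S_{J}-T_{I,J}\|$ then gives $\|S_{J}\|<\varepsilon$, as claimed.

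The argument is essentially routine; the main obstacle is not mathematical but one of bookkeeping: one must be careful that $I(\varepsilon)$ depends only on $\varepsilon$ (and the uniform bound $M$) and is chosen \emph{before} $J(\varepsilon)$, so that the boundary errors, which grow with $I$, can still be absorbed by making $J$ large. This ordering matches exactly the form of the statement.
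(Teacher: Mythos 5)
The paper never actually proves this lemma; it merely labels it a ``simplified version'' of the van der Corput lemma in the appendix (Lemma~\ref{vdrCorput}), whose proof is deferred to a separate reference. Your approach --- replacing \(S_J\) by the doubly averaged \(T_{I,J}\), controlling the discrepancy by boundary terms, and expanding \(\|T_{I,J}\|^2\) via Cauchy--Schwarz into diagonal and shifted off-diagonal correlations --- is exactly the standard finitary van der Corput argument, and the decomposition itself is carried out correctly.

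The gap is in the final arithmetic. Your estimate reads
\[
\|T_{I,J}\|^{2}\ \le\ \frac{M^{2}}{I}\ +\ \frac{\varepsilon}{2}\ +\ O\!\left(\frac{IM^{2}}{J}\right),
\]
and with your choices this becomes \(\|T_{I,J}\|^{2}\le \varepsilon^{2}/4+\varepsilon/2\), so \(\|T_{I,J}\|\le\sqrt{\varepsilon^{2}/4+\varepsilon/2}\). Combined with \(\|S_J\|\le\|T_{I,J}\|+\varepsilon/4\), this is \emph{larger} than \(\varepsilon\) throughout the non-trivial range: the inequality \(\sqrt{\varepsilon^{2}/4+\varepsilon/2}+\varepsilon/4<\varepsilon\) forces \(\varepsilon>8/5\). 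The dominant term \(\varepsilon/2\) sits under a square root, so the method yields \(\|S_J\|\lesssim\sqrt{\varepsilon/2}\), not \(\lesssim\varepsilon\). Indeed the lemma is false as literally stated: take \(v,w_{1},w_{2},\dots\) orthonormal, \(a^{2}+b^{2}=1\), and set \(u_{j}=av+bw_{j}\); then every off-diagonal correlation equals \(a^{2}\), while \(\|S_J\|\to|a|\). Choosing \(a^{2}\) slightly below \(\varepsilon/2\) satisfies the hypothesis but gives \(\|S_J\|\approx\sqrt{\varepsilon/2}>\varepsilon\) for small \(\varepsilon\). The intended and usable content requires the hypothesis bound to be quadratic in the target, say \(|\frac{1}{J}\sum_j\langle u_j,u_{j+i}\rangle|<\varepsilon^{2}/2\), in which case your bookkeeping closes cleanly. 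You slid from the squared estimate to the unsquared conclusion without noticing the mismatch in powers of \(\varepsilon\); a complete answer should either correct the exponent in the hypothesis or explicitly flag the imprecision in the paper's statement.
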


\noindent First we prove a similar kind of result concerning polynomial shifts of a totally ergodic sequence.

\begin{lemma}
\label{imp_lemma}
Let \( A \subset \mathbb{N} \)  be totally ergodic set.
Let $p(x),q(x) \in \mathbb{Z}[x]$ be non constant polynomials with \( p(x),q(x) \to_{x \to \infty} \infty \)
 and $ \deg{q(x)} < \deg{p(x)}$,  then for any
$\varepsilon > 0$ and any $J'$ there exist  \( J(\varepsilon,J') \) with
\(  J(\varepsilon,J') \geq J' \) such that for every \(J \geq  J(\varepsilon,J') \) there exists
$N(\varepsilon,J)$ such that for every
\( N \geq N(\varepsilon, J) \)  we have
\[
\left\Vert \frac{1}{J}\sum _{j=1}^{J}v_{j}^q\right\Vert _{p(N)} <\varepsilon,
\]
where $v_j^q(n) = \xi (n  +  q(N+j))$;$1 \leq n \leq p(N)$.
\newline
(\( \xi(n) = 1_A(n) - d(A) \), where \( d(A) \) denotes the density of \( A \))
\end {lemma}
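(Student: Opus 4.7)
My plan is to prove Lemma \ref{imp_lemma} by induction on $d := \deg q$, using the finitary van der Corput Lemma \ref{simple_corput} for the induction step and total ergodicity for the base case. The hypothesis $\deg q < \deg p$ will enter through the observation that translations by $q(N+j) = O(N^d)$ are negligible relative to $p(N) \asymp N^{\deg p}$.

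For the base case $d = 1$, I would write $q(x) = ax + b$ with $a \ge 1$, set $F(\omega) = \omega_0 - d(A) \in C(X_{1_A})$ so that $F(T^n 1_A) = \xi(n)$, and define $H_J := \frac{1}{J}\sum_{j=1}^J F \circ T^{aj}$, which is continuous and independent of $N$. Then $\frac{1}{J}\sum_j v_j^q(n) = H_J(T^{n+aN+b} 1_A)$. Since $1_A$ is generic for $(X_{1_A}, T, \mu)$ and $aN+b = o(p(N))$, a standard truncation of partial Cesaro sums gives
\[
\lim_{N \to \infty} \left\| \frac{1}{J}\sum_{j=1}^J v_j^q \right\|_{p(N)}^2 = \int H_J^2 \, d\mu.
\]
Total ergodicity makes $T^a$ ergodic, so the mean ergodic theorem gives $H_J \to \int F \, d\mu = 0$ in $L^2(\mu)$, and the base case follows by first choosing $J$ large and then $N$ large.

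For the inductive step I apply Lemma \ref{simple_corput} to the family $\{v_j^q\}$. Changing summation variable $m = n + q(N+j)$ inside $\langle v_j^q, v_{j+i}^q\rangle_{p(N)}$ and absorbing the boundary mismatch of size $O(q(N+J))/p(N) = o(1)$ yields
\[
\frac{1}{J}\sum_{j=1}^J \langle v_j^q, v_{j+i}^q \rangle_{p(N)} = \left\langle \xi, \frac{1}{J}\sum_{j=1}^J v_j^{\tilde q_i} \right\rangle_{p(N)} + o(1),
\]
where $\tilde q_i(x) := q(x+i) - q(x)$ has degree $d-1$, positive leading coefficient (so $\tilde q_i \to \infty$), and $\deg \tilde q_i < \deg p$. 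Cauchy-Schwarz combined with $\|\xi\|_{p(N)} \le 1$ reduces the estimate to a bound on $\|\frac{1}{J}\sum_j v_j^{\tilde q_i}\|_{p(N)}$, which the induction hypothesis drives below any prescribed threshold. Taking maxima of the $J$-thresholds and $N$-thresholds over the finitely many $i = 1,\dots,I(\varepsilon)$ returned by Lemma \ref{simple_corput} and then invoking that lemma yields $\|\frac{1}{J}\sum_{j=1}^J v_j^q\|_{p(N)} < \varepsilon$.

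The main obstacle I expect is the quantifier bookkeeping: at each induction step the outer $J$-threshold must absorb both the threshold from Lemma \ref{simple_corput} and the finitely many $J_i$ arising from applying the induction hypothesis to each $\tilde q_i$, while the $N$-threshold must dominate the corresponding $N_i(\varepsilon, J)$ and still leave enough room to make the boundary-error $o(1)$ term smaller than $\varepsilon/4$ uniformly in $j \in [1, J]$. The flexibility $J(\varepsilon, J') \ge J'$ built into the statement is precisely what allows this iterative absorption to go through cleanly at each level of the induction.
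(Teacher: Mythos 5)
Your proposal is correct and follows essentially the same path as the paper: induction on $\deg q$, genericity plus total ergodicity for the linear base case, and van der Corput (Lemma~\ref{simple_corput}) with the difference polynomial $q(x+i)-q(x)$ of degree $d-1$ plus Cauchy--Schwarz for the inductive step, with the boundary error absorbed as $o(1)$ because $q(N+J) = o(p(N))$. The only cosmetic divergence is in the base case, where you invoke the mean ergodic theorem for $T^a$ directly to identify the $L^2$-limit as $\int F\,d\mu = 0$, whereas the paper takes a slightly roundabout route through von Neumann's theorem to get convergence to a constant and then Birkhoff plus a bounded $L^1\Rightarrow L^2$ argument to show that constant is zero; your streamlining is harmless.
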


\begin{remark}
$N(\varepsilon,J)$ in the lemma is chosen to be such that $p(N)$
and $q(N)$ greater than zero for any \( N \geq N(\varepsilon,J)\).
\end{remark}
\begin{proof}
By induction on $\deg q(x)$.
\newline
For $ \deg q(x) = 1 $ the claim follows from
total ergodicity of $A$.
\newline
Assume \( q(x) = a x + b \) then

\[
 \left\Vert \frac{1}{J}\sum _{j=1}^{J}v_{j}^q\right\Vert _{p(N)}^2 =
\frac{1}{p(N)}\sum ^{p(N)}_{n=1} ( \frac{1}{J}\sum ^{J}_{j=1}  \xi(n + a (N+j) + b ) )^2 =
\frac{1}{p(N)}\sum ^{p(N)}_{n=1} ( \frac{1}{J}\sum ^{J}_{j=1}  \xi( n + a j ) )^2 -
\]

\[
 - \frac{1}{p(N)}\sum ^{a N + b}_{n=1} ( \frac{1}{J}\sum ^{J}_{j=1}  \xi( n + a j ) )^2 +
\frac{1}{p(N)}\sum ^{p(N) + a N + b}_{n=p(N)+1} ( \frac{1}{J}\sum ^{J}_{j=1}  \xi( n + a j ) )^2 =
\]

\[
\frac{1}{p(N)}\sum ^{p(N)}_{n=1} ( \frac{1}{J}\sum ^{J}_{j=1}  \xi( n + a j ) )^2 + \delta_{N,J},
\]
where  \(  \delta_{N,J} \rightarrow_{ N \rightarrow \infty} 0 \). By genericity of the point
\( \xi \in X_{ \xi} \) it follows that 
\begin{equation}
\label{crucial}
\frac{1}{p(N)}\sum ^{p(N)}_{n=1} ( \frac{1}{J}\sum ^{J}_{j=1}  \xi( n + a j ) )^2
\rightarrow_{ N \rightarrow \infty}  \int_{ X_{ \xi } }  ( \frac{1}{J}\sum ^{J}_{j=1}  f(T^{ aj } x) )^2 d\mu(x) =
 \left\Vert \frac{1}{J}\sum _{j=1}^{J}T^{ aj }f  \right\Vert_{L^2(X_{ \xi}, \mu)}^2
\end{equation}
where  \( f \in C( X_{ \xi } ) \) and it is defined by
\( f( \omega ) = \omega_1, \omega = ( \omega_1, \omega_2, \ldots , \omega_n, \ldots ) \). Note that 
\[
\int_{X_{\xi}} f d\mu = 0.
\]
\newline
Applying Von-Neumann ( \( L^2 \) ) ergodic theorem and by using ergodicity of \( ( X_{ \xi }, \mathbb{B}, \mu, T^a ) \)
we get
\[
\frac{1}{J}\sum _{j=1}^{J}T^{ aj }f  \rightarrow_{ J \rightarrow \infty }^{L^2} c
\]
where \( c \) is some constant. To prove  \( c = 0 \) we use the easy fact that if
\( g_J \rightarrow^{L^1}_{J \rightarrow \infty} 0 \)
and \(  \left\Vert g_J \right\Vert_{\infty} \leq M \) for any J, then
\( g_J \rightarrow^{L^2}_{J \rightarrow \infty} 0 \). 
The system \( ( X_{ \xi }, \mathbb{B},  \mu, T^a ) \) is ergodic thus by using Birkhoff ergodic theorem we get
\[  \frac{1}{J}\sum _{j=1}^{J}T^{ aj }f  \rightarrow_{ J \rightarrow \infty }^{L^1} \int_{X_{\xi}} f d\mu = 0. \]

\noindent
Let \( \varepsilon > 0 \) and \( J' \) is given. We showed that there exists \( J(\varepsilon) \)
such that for every \( J \geq J(\varepsilon) \) holds
\[
\left\Vert \frac{1}{J}\sum _{j=1}^{J}T^{ aj }f  \right\Vert_{L^2(X_{ \xi}, \mu)}^2 \leq \frac {\varepsilon} {2}.
\]
Define \( \mathbb{J} \risingdotseq \max( J(\varepsilon),J' ) \). Then from  (\ref{crucial}) it follows
that for  every \( J \geq  \mathbb{J} \) there exists \( N_1(\varepsilon,J) \) such that for every
\( N \geq  N_1(\varepsilon,J) \) we have

\[
\frac{1}{p(N)}\sum ^{p(N)}_{n=1} ( \frac{1}{\mathbb{J}}\sum ^{\mathbb{J}}_{j=1}  \xi( n + a j ) )^2  \leq \frac { 3 \varepsilon}{4}.
\]
On the other hand there exists \( N_2(\varepsilon,J) \) such that for every \( N \geq N_2(\varepsilon,J) \) we have 
\( \delta_{N,J} < \frac{\varepsilon}{4} \). 

\noindent Therefore for every
\( N \geq N(\varepsilon,J) \risingdotseq \max \left(N_1(\varepsilon,J),N_2(\varepsilon,J)\right) \)
we have

\[
 \left\Vert \frac{1}{J}\sum _{j=1}^{J}v_{j}^q\right\Vert _{p(N)}^2 < \varepsilon.
\]




\noindent Let  $ \deg q(x) = n $.

\noindent We have 
$\left\Vert v_j^q \right \Vert_{p(N)} \leq 1$, 
 therefore by lemma \ref{simple_corput} it is enough to show that there exists \( \mathbb{J}  \geq max(J',J(\varepsilon))\) such
that for every $J \geq \mathbb{J}$ there exists \( N(\varepsilon,J) \) such that for every
$N \geq N(\varepsilon,J)$ and every $ i: 1 \leq i \leq I(\varepsilon) $ we have 

\begin{equation}
\label{main_ineq}
\left| \frac{1}{J}\sum ^{J}_{j=1}<v_{j}^q, v_{j+i}^q>_{p(N)} \right| < \frac {\varepsilon} {2}.
\end{equation}
Let \( J'' = max( J',  J(\varepsilon ) ) \).

\noindent 
We have

\[
 \frac{1}{J}\sum ^{J}_{j=1}<v_{j}^q, v_{j+i}^q>_{p(N)} =
 \frac{1}{J}\sum ^{J}_{j=1} \frac{1}{p(N)}\sum ^{p(N)}_{n=1} \xi(n + q(N+j)) \xi(n + q(N+j+i)) =
\]

\[
= \frac{1}{J}\sum ^{J}_{j=1} \frac{1}{p(N)}\sum ^{p(N)}_{n=1} \xi(n) \xi(n + q(N+j+i) - q(N+j))
\]

\[
 - \frac{1}{J}\sum ^{J}_{j=1} \frac{1}{p(N)}\sum ^{q(N+j)}_{n=1} \xi(n) \xi(n + q(N+j+i) - q(N+j))
\]

\[
 + \frac{1}{J}\sum ^{J}_{j=1} \frac{1}{p(N)}\sum ^{p(N) + q(N+j)}_{n=p(N)+1} \xi(n) \xi(n + q(N+j+i) - q(N+j))
\]

\[
 = \frac{1}{p(N)}\sum ^{p(N)}_{n=1} \xi(n)  \frac{1}{J}\sum ^{J}_{j=1}  \xi(n + q(N+j+i) - q(N+j))
+ \delta_{N,J,i},
\]
where \( \delta_{N,J,i} \rightarrow_{N \rightarrow \infty} 0 \).
\newline
Denote
$w_{i,j}^q(n) = \xi (n + q(N+j+i) - q(N+j))$; $1 \leq n \leq p(N)$,
\( r(x) = q(x+i) - q(x) \). Note that \( \deg{r(x)} = \deg{q(x)} - 1 \) and \( r(x) \to_{x \to \infty} \infty \). By induction's hypothesis it follows
that there exists $J( J'',\frac{\varepsilon}{4},i)$ ( note \( J( J'',\frac{\varepsilon}{4},i) ) \geq J'' \) )
such that for every
\( J \geq J( J'',\frac{\varepsilon}{4},i ) \) there exists $N( \frac{\varepsilon}{4} , i , J )$  and we have 

\[
\left\Vert \frac{1}{J}\sum _{j=1}^{J}w_{i,j}^q\right\Vert _{p(N)}< \frac {\varepsilon} {4}
\]
for every \( N \geq  N(\frac {\varepsilon}{4}, i, J ) \). 
 Cauchy-Schwartz  inequality implies

\[
 \left| \frac{1}{J}\sum ^{J}_{j=1}<v_{j}^q, v_{j+i}^q>_{p(N)} \right|  \leq
  \left| <\xi , \frac{1}{J}\sum ^{J}_{j=1} w_{i,j}^q>_{p(N)} \right| + | \delta_{N,J,i}|
\]

\[
 \leq \left\Vert \xi \right\Vert _{p(N)}  \left\Vert \frac{1}{J}\sum _{j=1}^{J}w_{i,j}^q\right\Vert _{p(N)}
+ | \delta_{N,J,i}| = \frac {\varepsilon} {4} + | \delta_{N,J,i}|
\]
for every chosen $J \geq J(J'',\frac{\varepsilon}{4}, i )$ and every
$ N \geq N( \frac {\varepsilon}{4}, i , J ) $. 
We noted that \( \delta_{N,J,i} \to_{N \to \infty} 0 \). Therefore  there exists $N'(\frac{\varepsilon}{4}, i, J)$ such that for every $ N \geq
N'(\frac{\varepsilon}{4}, i, J)$ we have \( | \delta_{N,J,i}| <
\frac{\varepsilon}{4} \).

\noindent 
Denote  $J_{\varepsilon,i,J''} \risingdotseq J(  J'',\frac{\varepsilon}{4},
i   )$. Let \( J \geq  J_{\varepsilon,i,J''} \), denote by    
\newline
$ N_{\varepsilon,i, J} \risingdotseq \max{(N(\frac {\varepsilon}{4}, i, J ),N'(\frac{\varepsilon}{4}, i, J ) )} $. Then  for every 
\( J \geq  J_{\varepsilon,i,J''} \) and every \( N \geq N_{\varepsilon,i, J} \) we have
\[
 \left| \frac{1}{J}\sum ^{J}_{j=1}<v_{j}^q, v_{j+i}^q>_{p(N)} \right|  < \frac {\varepsilon} {2}.
\]
Finally for every $ J \geq \mathbb{J} \risingdotseq \max_{1 \leq i \leq I(\varepsilon)} ( J_{\varepsilon,i,J''} )$
and for every  $ N \geq N(\varepsilon,J) \risingdotseq \max_{1 \leq i \leq I(\varepsilon)} ( N_{\varepsilon,i, J} )$
the inequality
 (\ref{main_ineq}) holds for  every \( 1 \leq i \leq I(\varepsilon) \). 
 
 \hspace{12cm} \qed
\end{proof}




















\noindent
\textbf{Proof of lemma \ref{shift_ort}.}

\noindent Denote $u_j (n) = \xi ( p(N+j) - n )$;$1 \leq n \leq p(N)$.
\noindent
For \( \deg p(x) = 1 \) the claim follows from total ergodicity of  \( A \) and genericity of point \( \xi \) (the same argument as for the linear case of lemma \ref{imp_lemma}).





\noindent
For \( \deg p(x) > 1 \), we use  lemma \ref{simple_corput}.
Note that  \( \left\Vert u_j \right\Vert_{p(N)} \leq 1\). Let \( \varepsilon > 0 \). If we show that  for every
\( J \geq J(\varepsilon) \) (where \( J(\varepsilon)\) is taken from Van der Corput's lemma) there exists
\( N(\varepsilon,J) \) such that for  every \( 1 \leq i \leq I(\varepsilon) \) [ \( I(\varepsilon) \) is also taken
from the formulation of Van der Corput's lemma] and for every \( N \geq N(\varepsilon,J) \) holds

\[
\left| \frac {1} {J} \sum^{J}_{j=1} < u_j , u_{j+i} >_{p(N)} \right| < \frac {\varepsilon} {2},
\]
then, by lemma \ref{simple_corput}, for every \(  J \geq J(\varepsilon) \) there exists \(  N(\varepsilon,J) \) such that for every
\( N \geq N(\varepsilon,J) \) we have

\[
\left\Vert \frac{1}{J}\sum _{j=1}^{J}u_{j}\right\Vert _{p(N)} <\varepsilon.
\]
One knows

\begin{equation}
\label{stam}
 \frac {1} {J} \sum^{J}_{j=1} < u_j , u_{j+i} >_{p(N)}  =
 \frac{1}{p(N)}\sum ^{p(N)}_{n=1} \xi(n)  \frac{1}{J}\sum ^{J}_{j=1}  \xi(n + p(N+j+i) - p(N+j))
+ \delta_{N,J,i},
\end{equation}
where \(\delta_{N,J,i} \rightarrow_{N \rightarrow \infty} 0 \).
Denote \( q(x)\risingdotseq p(x+i) - p(x) \) (\( \deg{q(x)} < \deg{p(x)} \)), 
\( v^q_{j,i} (n) \risingdotseq \xi (n + q(N+j)) \), \( n = 1, \ldots , p(N) \).
Then by 
lemma \ref{imp_lemma} we have

\[
\left| \frac{1}{p(N)}\sum ^{p(N)}_{n=1} \xi(n)  \frac{1}{J}\sum ^{J}_{j=1}  \xi(n + p(N+j+i) - p(N+j)) \right| =
\left| < \xi, \frac {1} {J} \sum^J_{j=1} v^q_{j,i}>_{p(N)}  \right| \leq \frac {\varepsilon} {4}
\]
holds for every \( J \geq \mathbb{J}_i \), for some \( \mathbb{J}_i \geq J(\varepsilon) \), and every
\( N \geq N( \frac {\varepsilon} {4}, J, i) \).
\newline
From (\ref{stam}) it follows that for every \( J \geq \mathbb{J}_i  \geq J(\varepsilon) \) there exists
\( N'(\frac {\varepsilon} {4}, J, i) \) such that for every \( N \geq N'( \frac {\varepsilon} {4}, J, i) \)
we have

\[
\left| \frac {1} {J} \sum^{J}_{j=1} < u_j , u_{j+i} >_{p(N)} \right| < \frac {\varepsilon} {2}.
\]
Denote \( \mathbb{J} \risingdotseq \max_{ 1 \leq i \leq I(\varepsilon) } \mathbb{J}_i \),
then for every \( J \geq \mathbb{J} \geq J(\varepsilon) \) there exists
\( N(\varepsilon, J) \risingdotseq \max_{ 1 \leq i \leq I(\varepsilon) } N'(\frac {\varepsilon} {4}, J, i) \) such
that for every \( N \geq  N(\varepsilon, J) \) we have
\[
\left| \frac {1} {J} \sum^{J}_{j=1} < u_j , u_{j+i} >_{p(N)} \right| < \frac {\varepsilon} {2}
\]
for every \( 1 \leq i \leq I(\varepsilon) \).

 \hspace{12cm} \qed

\noindent
\textbf{Proof of theorem \ref{theorem1}.}
Denote \( c = \overline{d(B)} > 0, u_j (n) = \xi(p(N+j) - n); \, 1 \leq n \leq p(N), \, 1 \leq j \leq J \).
\newline
If \( (A+B) \cap \{ p(n) | n \in \N \} = \emptyset \) then \( \forall b \in B, \forall N \in \N, \forall j \in \N: p(N+j) - b \not \in A \). Thus 
\[
\left<1_B, \frac{1}{J} \sum_{j=1}^J u_j \right>_{p(N)} = \frac{1}{p(N)} \sum_{n=1}^{p(N)}
1_B(n) \frac{1}{J} \sum_{j=1}^{J} \xi(p(N+j) - n) =
\]  
\[
-d(A) \frac{\left| B \cap \{ 1,2,\ldots, p(N) \}\right|}{p(N)}.
\]
Therefore for infinitely many \( N\)'s we have
\[
\left | \left<1_B, \frac{1}{J} \sum_{j=1}^J u_j \right>_{p(N)} \right| \geq \frac{d(A) c}{2}.
\] 

\noindent Take \( \varepsilon = \frac{d(A) c}{4} \). By lemma \ref{shift_ort} there exists \( J \) and \( N(J) \) 
such that for every \( N \geq N(J) \) we have
\begin{equation}
\label{bound_eq}
\left | \left<1_B, \frac{1}{J} \sum_{j=1}^J u_j \right>_{p(N)} \right| < \frac{d(A) c}{4}.
\end{equation}
We have got a contradiction.

\noindent If we assume that density of \( B \) exists and positive, then by use of (\ref{bound_eq}) for  \( N \) sufficiently large \( (A+B) \cap \{ p(N+1),\ldots,p(N+J) \} \neq \emptyset \). Thus the set
\[
R_p = \{ (A+B) \cap \{p(n) | \, n \in \N \} 
\]  
is syndetic.
\hspace{12cm} \qed

\section{Orthogonality of polynomial shifts }\label{subsect_orthogonality_polynomial_shifts}


\noindent
The following lemma is essentially the main tool in the proof of  theorem \ref{theorem3}.
It is inspired by the analogous proposition 2.0.1 in  \cite{fish2}.
\begin{lemma}
\label{first_orth_lemma}
Let \( A \subset \mathbb{N} \) be a  WM set   and assume that \( p_1,\ldots,p_k \in \mathbb{Z}[n] \) are essentially distinct
 polynomials with positive leading coefficients.  We set \( \xi(n)
= 1_{A}(n) - d(A)  \) for non-negative \( n \) and zero for \( n \leq 0 \), and we assume
 \( q(n) \in \mathbb{Z}[n] \) with a
positive leading coefficient, \( \deg( q) \geq \max_{1 \leq i \leq k}\deg (p_i) \) and for every
\( i  \, : \, 1 \leq i \leq k \) such that \( \deg(p_i) = \deg(q) \) we have
 that the leading coefficient
of \( q(n) \) is bigger than that of \( p_i \).
Then for  every \( \varepsilon > 0 \) there exists \( J(\varepsilon) \)
such that for every \( J \geq J(\varepsilon) \) there exists \(
N(J,\varepsilon) \) such that for every \( N \geq N(J,\varepsilon)
\) we have
\[
\left\Vert \frac{1}{J} \sum_{j=1}^J a_{N+j}\xi(n - p_1(N+j)) \xi(n -
p_2(N+j)) \ldots \xi(n - p_{k}(N+j)) \right\Vert_{q(N)} <
\varepsilon
\]
for every \( \{a_n\} \in \{0,1\}^{\mathbb{N}} \).
\end{lemma}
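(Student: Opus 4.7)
The plan is to execute Bergelson's Polynomial Exhaustion Technique (PET) induction, arranged so that one application of the finitary van der Corput lemma (Lemma~\ref{simple_corput}) strictly decreases a suitable complexity measure on the tuple $(p_1,\ldots,p_k)$. The base case $k=1$ follows from Lemma~\ref{shift_ort}, with the bounded weights $a_{N+j}\in\{0,1\}$ contributing at most a factor of $1$ and with $q$ replacing $p$ via the degree/leading-coefficient hypothesis, which controls the attendant boundary terms.

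For the inductive step, set $u_j(n)=a_{N+j}\prod_{\ell=1}^{k}\xi(n-p_\ell(N+j))$, so that $\|u_j\|_{q(N)}\le 1$. Lemma~\ref{simple_corput} reduces the task to bounding, for each $1\le i\le I(\varepsilon)$,
\[
\frac{1}{J}\sum_{j=1}^J\langle u_j,u_{j+i}\rangle_{q(N)}=\frac{1}{J}\sum_{j=1}^J\frac{a_{N+j}a_{N+j+i}}{q(N)}\sum_{n=1}^{q(N)}\prod_{\ell=1}^{k}\xi(n-p_\ell(N+j))\,\xi(n-p_\ell(N+j+i)).
\]
Pick a distinguished index $\ell_0$ according to the PET rule (for instance, a polynomial of minimal degree in the tuple, with ties broken by leading coefficient), perform the change of variable $m=n-p_{\ell_0}(N+j)$ in the inner sum, and exploit the fact that $\xi$ vanishes on non-positive integers so that the effective range of summation changes only by a negligible amount, thanks to the dominance hypothesis on $q$. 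The factor with $\ell=\ell_0$ in the first product becomes $\xi(m)$, independent of $j$.

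Pull this $j$-independent $\xi(m)$ out and apply Cauchy--Schwarz in $\langle\cdot,\cdot\rangle_{q(N)}$ together with $\|\xi\|_{q(N)}\le 1$. Modulo vanishing errors the bound reduces to
\[
\left\|\frac{1}{J}\sum_{j=1}^J b_{N+j}^{(i)}\prod_{P\in\mathcal{Q}_i}\xi(m-P(N+j))\right\|_{q(N)},
\]
where $b_n^{(i)}=a_n a_{n+i}\in\{0,1\}$ and $\mathcal{Q}_i$ is the $(2k-1)$-tuple consisting of $p_{\ell_0}(x+i)-p_{\ell_0}(x)$, the $p_\ell(x)-p_{\ell_0}(x)$ for $\ell\neq\ell_0$, and the $p_\ell(x+i)-p_{\ell_0}(x)$ for $\ell=1,\ldots,k$. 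The crucial point is that $p_{\ell_0}(x+i)-p_{\ell_0}(x)$ has degree strictly less than $\deg p_{\ell_0}$; choosing $\ell_0$ as above ensures that the PET complexity of $\mathcal{Q}_i$ is strictly smaller than that of the original tuple, so the inductive hypothesis applies.

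The main obstacle is the PET bookkeeping: one has to commit to a concrete complexity measure (typically a lexicographically ordered vector whose $d$-th entry counts the distinct leading coefficients of polynomials of degree $d$ in the tuple), verify that $\mathcal{Q}_i$ is again essentially distinct with positive leading coefficients (using essential distinctness of the original tuple together with $i\ge 1$), and track the dominance hypothesis on $q$ through each reduction so that the inductive hypothesis remains applicable. Weak mixing of $A$, rather than mere total ergodicity, is needed because the eventual base case at the bottom of the recursion involves multilinear averages whose convergence to zero in $L^2$ requires ergodicity of the product system $(X_\xi\times X_\xi,\B_{X_\xi\times X_\xi},\mu\times\mu,T\times T)$.
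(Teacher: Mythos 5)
Your high-level plan (PET induction plus finitary van der Corput, with weak mixing entering at the base case) is the right framework and matches the paper's strategy, but there is a genuine gap at the linear stage of the reduction that your bookkeeping does not survive.

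Concretely: you propose to choose a distinguished $p_{\ell_0}$ of minimal degree, change variables by $m=n-p_{\ell_0}(N+j)$, and then apply the inductive hypothesis to the tuple $\mathcal{Q}_i$ consisting of $p_{\ell_0}(x+i)-p_{\ell_0}(x)$, the $p_\ell(x)-p_{\ell_0}(x)$, and the $p_\ell(x+i)-p_{\ell_0}(x)$. For this to be legitimate you need $\mathcal{Q}_i$ to be a family of \emph{essentially distinct} polynomials. When $\deg p_{\ell_0}\ge 2$ this holds (for all but finitely many $i$), and your reduction is exactly the paper's step $S.2_{d,i}$. But when the minimal degree is $1$, the family $\mathcal{Q}_i$ breaks the hypothesis: for each linear $p_\ell$, the two entries $p_\ell(x)-p_{\ell_0}(x)$ and $p_\ell(x+i)-p_{\ell_0}(x)$ differ by the \emph{constant} $p_\ell(x+i)-p_\ell(x)=c_\ell i$, and $p_{\ell_0}(x+i)-p_{\ell_0}(x)=c_{\ell_0}i$ itself is a constant, not a polynomial in the PET sense. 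A tuple containing two polynomials that differ by a constant is not essentially distinct, so you cannot close the induction. Throwing these constant shifts into a prefactor and invoking Cauchy--Schwarz does not help either, because that prefactor depends on $i$ and, upon iterating, you never arrive at a state covered by your base case.

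The paper's resolution is precisely to promote these constant differences to a new set of ``cube'' parameters $(h_1,\ldots,h_k)$ and to prove the strengthened statements $L(k;n_1,\ldots,n_d)$, whose norm involves the product $\prod_{\epsilon\in\{0,1\}^k}\xi(\cdot-\epsilon_1h_1-\cdots-\epsilon_kh_k)$ and whose conclusion holds for a density-$(1-\delta)$ set of $(h_1,\ldots,h_k)\in\{1,\ldots,H\}^k$. The implication $S.1_d:\;L(k+1;n_1,\ldots,n_d)\Rightarrow L(k;n_1{+}1,\ldots,n_d)$ is exactly the mechanism that absorbs a constant-shift pair into one additional cube coordinate rather than breaking essential distinctness. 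Correspondingly, the base case is $L(k;1)$, and closing it is not a matter of ergodicity of $T\times T$: one needs that the cube integrals $\int_{X_\xi}\prod_{\epsilon\in\{0,1\}^k}T^{\epsilon\cdot h}f\,d\mu$ are small for a dense set of $h$'s, which the paper obtains from Theorem 13.1 of Host and Kra together with Lemma \ref{wm_sub_lemma1}. Your proposal mentions ``multilinear averages'' and ``ergodicity of the product system,'' but that is not the correct input; the Host--Kra cube estimate (which uses weak mixing of all orders, or equivalently triviality of the Host--Kra factors) is essential. Without the extra cube parameters carried through the induction and without the Host--Kra input at the bottom, the argument does not close.
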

\begin{proof}
We prove this statement by using an analog of Bergelson's PET
induction, see \cite{berg_pet}. Let \( F = \{p_1,\ldots,p_k\}\) be a
finite set of polynomials and assume that the largest of the degrees
of \( p_i \) equals \( d \). For every \( i \, : \, 1 \leq i \leq d
\) we denote by \( n_i \) the number of different groups of
polynomials of degree \( i \), where two polynomials \(
p_{j_1},p_{j_2} \) of  degree \( i \) are in the same group if and
only if they have the same leading coefficient. We will say that \(
(n_1,\ldots,n_d)\) is the \textit{characteristic vector} of \( F \).

\noindent We prove a more general statement than the statement of the
lemma.

 \noindent Let \( \mathcal{F}(n_1,\dots,n_d) \) be the
family of all finite sets of essentially distinct polynomials having
characteristic vector \( (n_1,\ldots,n_d)\). Consider the following
two statements:
\newline
\( L(k ; n_1,\ldots,n_d) \): 'For every \(
\{g_1,\ldots,g_{n_1},q_1,\ldots,q_l\} \in
\mathcal{F}(n_1,\ldots,n_d)\), where \( d \leq \deg(q) \), \( q \) is increasing faster
than any \( q_i, \, i: \, 1 \leq i \leq l \) (the exact statement is formulated in lemma) and  \(
g_1,\ldots,g_{n_1} \) are linear polynomials, and every \(
\varepsilon,\delta
> 0 \) there exists \( H(\delta,\varepsilon) \in \mathbb{N} \) such
that for every \( H \geq  H(\delta,\varepsilon) \) there exists \(
J(H,\varepsilon) \in \mathbb{N} \) such that for every \( J
\geq J(H,\varepsilon) \) there exists \(
N(J,H,\varepsilon) \in \mathbb{N}\) such that for every \( N
\geq N(J,H,\varepsilon) \) for a set of \(
\{h_1\ldots,h_k\} \in [1 \ldots H]^k \) of density at least \( 1 -
\delta \) we have
\[
\Vert \frac{1}{J} \sum_{j=1}^J a_{N+j} \prod_{i=1}^{n_1}
\prod_{\epsilon \in \{0,1\}^k} \xi(n - g_i(N+j) - \epsilon_1 h_1 -
\ldots - \epsilon_k h_k) \prod_{i=1}^{l} \xi(n-q_i(N+j))
\Vert_{q(N)} < \varepsilon,
\]
for every \( \{a_n\} \in \{0,1\}^{\mathbb{N}} \)'.
\newline
\( L(k;\overline{n_1,\ldots,n_i},n_{i+1},\ldots,n_d)\): '\( L(k;n_1,\ldots,n_d) \) is valid for any \( n_1,\ldots,n_i \)'.

\noindent  Lemma \ref{first_orth_lemma} is the special case \(
L(0; \overline{n_1,\ldots,n_d})\), where  \( d \leq \deg(q) \) and the polynomial
\( q \) is increasing faster than all polynomials in the given family of polynomials
which has the characteristic vector \((n_1,\ldots,n_d)\). In
order to prove the latter  it is enough to establish \( L(k;1) \,
, \, \forall k \in \mathbb{N} \cup \{0\}\), and to prove the
following implications:
\[
  S.1_d: \,  L(k+1;n_1,n_2,\ldots,n_d) \Rightarrow L(k;n_1+1,n_2,\ldots,n_d);
  \]
  \[ k,n_1,\ldots,n_{d-1} \geq 0, n_d \geq 1, d \geq 1
\]
\[
  S.2_{d,i}: \, L(0;\overline{n_1,\ldots,n_{i-1}},n_i,\ldots,n_d) \Rightarrow L(k;\underbrace{0,\ldots,0}_{i-1 \, zeros},n_i+1,n_{i+1},\ldots,n_d);
\]
\[
\hspace{2in} k;n_1,\ldots,n_{d-1} \geq 0,n_d \geq 1, d \geq i >1
\]
\[
  S.3_d: \, L(k;\overline{n_1,\ldots,n_d}) \Rightarrow
L(k;\underbrace{0,\ldots,0}_{d \, zeros},1), \,\,\, k \geq 0 \, , \,
d \geq 1
\]

\noindent We start with a proof of statement \( S.2_{d,i} \). Suppose that \( F \) is a finite set of essentially distinct
polynomials and assume that the characteristic vector of \( F \) equals
\newline
\( (\underbrace{0,\ldots,0}_{i-1 zeros}, n_i+1,n_{i+1},\ldots,n_d)
\). Fix any of the \( n_i + 1 \) groups of polynomials of degree
\( i \) and denote its polynomials by \( g_1,\ldots,g_m \). Denote
the remaining polynomials in \( F \) by \( q_1,\ldots,q_l \).
Because there are no linear polynomials among the polynomials of
\( F \) , we have to show the following:

\noindent \textit{Let the family \( F \doteq
\{g_1,\ldots,g_m,q_1,\ldots,q_l\}\) of polynomials with the
characteristic vector \( (\underbrace{0,\ldots,0}_{i-1
zeros},n_i+1,n_{i+1},\ldots,n_d)\), where \(\{g_1,g_2,\ldots,g_m\}
\in \mathbb{Z}[n] \) is one of the groups of \( F\) of the degree
\( i, \, i > 1 \). Let \( A \) be a WM set and denote by \( \xi \)
the normalized WM-sequence, i.e., \( \xi(n) = 1_A(n) - d(A) \, ,
\, \forall n \in \mathbb{N}\). For every \( \varepsilon, \delta
> 0 \) there exists \( H(\varepsilon,\delta) \in \mathbb{N} \)
such that for every \( H \geq H(\varepsilon,\delta) \) there
exists \( J(\varepsilon,H) \) such that for every \( J \geq
J(\varepsilon,H) \) there exists \( N(J,\varepsilon,H) \) such
that for every \( N \geq N(J,\varepsilon,H) \) for a set of \(
(h_1,\ldots,h_k) \in \{1,\ldots,H\}^k \) of density which is at
least \( 1 - \delta \) we have
\[
\Vert \frac{1}{J} \sum_{j=1}^J a_{N+j}\prod_{\epsilon \in \{0,1\}^k} \xi(n - \epsilon_1 h_1 -\ldots-\epsilon_k h_k) \xi(n - g_1(N+j))  \ldots
\xi(n - g_{m}(N+j))
 \]
  \[ \xi(n - q_{1}(N+j))\ldots
\xi(n - q_{l}(N+j))\Vert_{q(N)} < \varepsilon,
\]
for every \( \{a_n\} \in \{0,1\}^{\mathbb{N}}\) and with the
condition \( \deg(q) \geq d \) and \( q \) is increasing faster than any
\( q_i, \, i: \, 1 \leq i \leq l \).}

\noindent  Denote by
\[
 u_j(n) \doteq  a_{N+j}\xi(n - g_1(N+j))  \ldots \xi(n - g_{m}(N+j))
\]
\[
 \xi(n - q_{1}(N+j))\ldots \xi(n - q_{l}(N+j)) ,
\]
\[
w(n) = \prod_{\epsilon \in \{0,1\}^k} \xi(n - \epsilon_1 h_1
-\ldots-\epsilon_k h_k),
\]
\[
 v_j(n) = w(n) u_j(n),
\]
\[
  \hspace{2in}  n=1,\ldots,q(N).
\]
The sequence \( w(n) \) is bounded by \( 1 \) and therefore to prove
that \( \Vert \frac{1}{J} \sum_{j=1}^J v_j\Vert_{q(N)} \) is small
it is sufficient to show that  \( \Vert \frac{1}{J} \sum_{j=1}^J u_j
\Vert_{q(N)} \) is small.

\noindent We apply the van der Corput lemma (see lemma
\ref{vdrCorput} in appendix):

\[
   \frac{1}{J} \sum_{j=1}^J <u_j,u_{j+h}>_{q(N)} =
\]
\[
   \frac{1}{q(N)} \sum_{n=1}^{q(N)} \frac{1}{J} \sum_{j=1}^J
a_{N+j}\xi(n - g_1(N+j))  \ldots \xi(n-g_m(N+j))
\]
\[
\xi(n- q_1(N+j)) \ldots \xi(n-q_l(N+j))
\]
\[
a_{N+j+h}\xi(n - g_1(N+j+h))  \ldots \xi(n - g_{m}(N+j+h))
\]
\[
\xi(n - q_1(N+j+h)) \ldots \xi(n-q_l(N+j+h)) =
\]
\[
    \frac{1}{q(N)-g_1(N)} \sum_{n=1}^{q(N)} \xi(n) \frac{1}{J} \sum_{j=1}^J a_{N+j}a_{N+j+h} \xi(n - (g_2(N+j) - g_1(N+j)))
    \ldots
\]
\[
    \xi(n- ( g_m(N+j)-g_1(N+j) )) \xi(n- (q_1(N+j) - g_1(N+j))) \ldots
\]
\[
    \xi(n-(q_l(N+j)-g_1(N+j)))  \xi(n - (g_1(N+j+h) - g_1(N+j)))  \ldots
\]
\[
    \xi(n - (g_{m}(N+j+h) - g_1(N+j))) \xi(n - (q_1(N+j+h) - g_1(N+j))) \ldots
\]
\[
\xi(n-(q_l(N+j+h) - g_1(N+j))) + \delta_{N,J}=
\]
\[
   \frac{1}{q(N)} \sum_{n=1}^{q(N)-g_1(N)} \xi(n) \frac{1}{J} \sum_{j=1}^J b_{N+j}
   \xi(n - r_1(N+j)) \ldots \xi(n - r_{m-1}(N+j)) \xi(n - r_m(N+j)) \ldots
\]
\[
  \xi(n- r_{m+l-1}(N+j)) \xi(n - r_{m+l}(N+j)) \ldots \xi(n-r_{2m+l-1}(N+j))
\]
\[
  \xi(n-r_{2m+l}(N+j)) \ldots \xi(n - r_{2m+2l-1}(N+j)) + \delta_{N,J},
\]
where in the second equality we used a change of variable \( n
\leftarrow n=n-g_1(N+j) \), \( b_{N+j} = a_{N+j}a_{N+j+h} \), \(
\delta_{N,J} \rightarrow_{\frac{J}{N} \rightarrow 0} 0 \) and
\[
   \left\{ \begin{array}{llll} r_t(n) = g_{t+1}(n) - g_1(n) \, , \, t: \, 1 \leq t \leq m-1 \\
                               r_t(n) = q_{t-(m-1)}(n) - g_1(n) \, , \, t: \, m \leq t \leq m+l-1 \\
                               r_t(n) = g_{t-(m+l-1)}(n+h) - g_1(n) \, , \, t: \, m+l \leq t \leq 2m+l-1\\
                               r_t(n) = q_{t-(2m+l-1)}(n+h)-g_1(n) \, , \, t: \, 2m+l \leq t \leq 2m+2l-1.
\end{array} \right.
\]
For all but a finite number of \( h \)'s the polynomials \(
\{r_t(n)\}_{t=1}^{2m+2l-1} \) are essentially distinct, because \( i
> 1 \) and the polynomials \(g_1,\ldots,g_m,q_1,q_l\) are essentially
distinct. To see the last property we notice that if we take two
polynomials \( r_t\)'s from the same group (there are \( 4 \)
groups), then their difference is a
 non-constant
because the initial polynomials are essentially distinct. If we take
two polynomials from different groups then three cases are possible.
In the first case the difference of these polynomials is \( g_t(n+h)
-g_t(n) \) or \( q_t(n+h) - q_t(n) \) for some \( t\). We assume
that \( i > 1 \) therefore \( min_{1\leq t \leq l} \min( \deg (q_t),
\deg(g_1)) > 1 \) and from this it follows that  \( g_t(n+h) -g_t(n)
\) and \( q_t(n+h) - q_t(n) \) are non-constant polynomials. In the
second case we get for some \( t_1 \neq t_2 \): \( g_{t_1}(n+h) -
g_{t_2}(n) \) or \( q_{t_1}(n+h) - q_{t_2}(n) \). Here we note that
the map \( h \mapsto p(n+h) \) is an injective map from \(
\mathbb{N} \) to the set of essentially distinct polynomials, if \(
\deg(p) > 1 \). Thus, for all but a finite number of \( h \)'s we
get again a non-constant difference. In the third case we get for
some \( t_1,t_2 \): \(g_{t_1}(n+h) - q_{t_2}(n)\) or \( q_{t_1}(n+h)
- g_{t_2}(n)\). The resulting polynomial has the same degree as \(
q_t \).

\noindent  The characteristic vector of the set of polynomials \( \{r_1,\ldots,r_{2m+2l-1}\}\) has the form
\( (c_1,\ldots,c_{i-1},n_i,n_{i+1},\ldots,n_d)\). The polynomials from the second and the fourth group have the same
degree as \( q_t \) and the same leading coefficient as \( q_t \) if \( \deg(q_t) > \deg(g_1) \) and the leading
coefficient will be the difference of leading coefficients of \( q_t \) and \( g_1 \) if \( \deg(q_t) = \deg(g_1)\).
The polynomials from the first and the third group will be of degree smaller than \( \deg(g_1) \).

\noindent Applying \(
L(0;\overline{n_1,\ldots,n_{i-1}},n_i,\ldots,n_d) \) with the new polynomial
\( q(n)-g_1(n) \) which is increasing faster than all the polynomials
 \( \{r_t(n)\}_{t=1}^{2m+2l-1} \) and
the Cauchy-Schwartz inequality we get that for all but a finite number
of \( h \)'s and for every \( \varepsilon > 0 \) there exists \(
J(\varepsilon,h) \) such that for every \( J \geq J(\varepsilon,h)
\) there exists \( N(J,\varepsilon,h) \) such that for every \( N
\geq N(J,\varepsilon,h) \) we have
\[
  \left| \frac{1}{J} \sum_{j=1}^J <u_j,u_{j+h}>_{q(N)} \right| < \varepsilon,
\]
for every \( \{a_n\} \in \{0,1\}^{\mathbb{N}}\).

\noindent By the van der Corput lemma it follows that for every \(
\varepsilon > 0 \) there exists \( J(\varepsilon) \) such that for
every \( J \geq J(\varepsilon) \) there exists \( N(J,\varepsilon)
\) such that for every \( N \geq N(J,\varepsilon) \) we have
\[
  \left \Vert \frac{1}{J} \sum_{j=1}^J u_j \right \Vert_{q(N)} < \varepsilon,
\]
for every  \( \{a_n\} \in \{0,1\}^{\mathbb{N}}\). Thus we have shown
the validity of \( L(k;\underbrace{0,\ldots,0}_{i-1 zeros},
n_i+1,n_{i+1},\ldots,n_d) \).

\noindent We proceed with a proof of \( S.1_d \). We fix  the
 \( n_1 + 1 \) groups of the polynomials of degree \( 1 \) and denote its polynomials by
 \( g_1(n)=c_1n+d_1, \ldots, g_{n_1+1} = c_{n_1+1}n+d_{n_1+1}\).
 (By the assumption that
 all given polynomials are essentially distinct we get that in any group of degree \( 1 \) there is only one polynomial).
  The remaining  polynomials we denote by \( q_1,\ldots,q_l\). The set of polynomials
  \( \{g_1,\ldots,g_{n_1+1},q_1,\ldots,q_l\}\) has the characteristic vector \( ( n_1+1,n_2,\ldots,n_d)\).
  Again we  apply the van der Corput lemma. Let \( u_j(n) \)
  be defined as following
\[
 u_j(n) \doteq  a_{N+j}\prod_{i=1}^{n_1+1} \prod_{\epsilon \in \{0,1\}^k}\xi(n - g_i(N+j) - \epsilon_1 h_1 - \ldots - \epsilon_k h_k)
 \prod_{i=1}^l \xi(n - q_i(N+j)),
\]
\[
  \hspace{2in}  n=1,\ldots,q(N).
\]
Then we have
\[
  \frac{1}{J} \sum_{j=1}^J <u_j,u_{j+h}>_{q(N)} =
\]
\[
  \frac{1}{q(N)} \sum_{n=1}^{q(N)} \frac{1}{J} \sum_{j=1}^J a_{N+j} a_{N+j+h}
\]
\[
  \prod_{i=1}^{n_1+1} \prod_{\epsilon \in \{0,1\}^k}\xi(n - g_i(N+j) - \epsilon_1 h_1 - \ldots - \epsilon_k h_k)
 \prod_{i=1}^l \xi(n - q_i(N+j))
 \]
\[
 \prod_{i=1}^{n_1+1} \prod_{\epsilon \in \{0,1\}^k}\xi(n - g_i(N+j+h) - \epsilon_1 h_1 - \ldots - \epsilon_k h_k)
 \prod_{i=1}^l \xi(n - q_i(N+j+h))=
\]
\[
  \frac{1}{q(N)-g_1(N)} \sum_{n=1}^{q(N)} \prod_{\epsilon \in \{0,1\}^k} \xi(n - \epsilon_1 h_1 - \ldots - \epsilon_k h_k )
  \xi(n - \epsilon_1 h_1 - \ldots - \epsilon_k h_k - c_1 h)
\]
\[
  \frac{1}{J} \sum_{j=1}^J b_{N+j}
  \prod_{i=1}^{n_1} \prod_{\epsilon \in \{0,1\}^k} \xi(n-(c_{i+1}-c_1)(N+j) - (d_{i+1}-d_1) - \epsilon_1 h_1 - \ldots - \epsilon_k h_k)
\]
\[
\prod_{i=1}^{n_1} \prod_{\epsilon \in \{0,1\}^k}
\xi(n-(c_{i+1}-c_1)(N+j) - (d_{i+1}-d_1) - \epsilon_1 h_1 -
\ldots - \epsilon_k h_k - c_{i+1}h)
\]
\[
 \prod_{i=1}^l \xi(n-(q_i(N+j)-g_1(N+j))) \prod_{i=1}^l \xi(n-(q_i(N+j+h)-g_1(N+j))) +\delta_{N,J},
\]
where in the second equality we made a change of variable \( n
\leftarrow n - g_1(N+j) \) and  \( b_{N+j} = a_{N+j}a_{N+j+h} \, ,
\, \delta_{N,J} \rightarrow_{\frac{J}{N} \rightarrow 0} 0 \).

\noindent Denote by \( r_i(n) = (c_{i+1}-c_1)n + (d_{i+1}-d_1) \, , \, i: 1 \leq i \leq n_1 \),
\(  s_i(n) = q_i(n) - g_1(n) \, , \, t_i(n) = q_i(n+h) - g_1(n) \, , \, i: 1 \leq i \leq l \). Then the last expression may
be rewritten as
\[
  \frac{1}{q(N)-g_1(N)} \sum_{n=1}^{q(N)-g_1(N)} \prod_{\epsilon \in \{0,1\}^k} \xi(n - \epsilon_1 h_1 - \ldots - \epsilon_k h_k)
  \xi(n - \epsilon_1 h_1 - \ldots - \epsilon_k h_k- c_1 h)
\]
\[
  \frac{1}{J} \sum_{j=1}^J b_{N+j} \prod_{i=1}^{n_1}  \prod_{\epsilon \in \{0,1\}^k}
  \xi(n-r_i(N+j) - \epsilon_1 h_1 - \ldots - \epsilon_k h_k)
\]
\[
  \xi(n-r_i(N+j) - \epsilon_1 h_1 - \ldots - \epsilon_k h_k -c_{i+1}h)
\]
\[
  \prod_{i=1}^l \xi(n - s_i(N+j)) \xi(n-t_i(N+j)) + \delta_{N,J}  \doteq E1 + \delta_{N,J}.
\]
For every \( i \, : \, 1 \leq i \leq l \) the polynomials \( s_i,t_i
\) are in the same group (have the same degree and the same leading
coefficient), therefore the characteristic vector of the family \(
\{s_1,t_1,\ldots,s_l,t_l\}\) is the same as of the family \(
\{s_1,s_2,\ldots,s_l\}\) and , obviously, the characteristic vector
of the latter family is the same as of the family \(
\{q_1,q_2,\ldots,q_l\}\) and is equal to \((0,n_2,n_3,\ldots,n_d)\).
Again the polynomial \( q(n)-g_1(n) \) is increasing faster than any polynomial in the family
\(\{s_1,t_1,\ldots,s_l,t_l\}\) .
By use of \( L(k+1;n_1,\ldots,n_d) \) and the Cauchy-Schwartz
inequality we show that \( |E1| \) is arbitrarily small for a set of
arbitrarily large density of \( (h_1,\ldots,h_k,h)\)'s. Therefore,
by the van der Corput lemma we deduce the  validity of \(
L(k;n_1+1,n_2,\ldots,n_d)\).

\noindent The proof of \( S.3_d \) goes exactly in the same way as that
of \( S.2_{d,i} \).

\noindent \textbf{Proof of \( L(k;1) \, , \, \forall k \in \mathbb{N} \cup
\{0\}\)}:

\noindent  Assume that \( g_1(n) = c_1 n + d_1 \, , \, c_1 > 0 \) and \(q\) is increasing faster than \(g_1\) (\(q(n)-g_1(n) \to_{n \to \infty} \infty\)).
We show that

\noindent \textit{For every \( \varepsilon, \delta > 0\) there exists
\( H(\delta,\varepsilon) \in \mathbb{N} \) such that
for every \( H \geq H(\delta,\varepsilon) \) there exists \( J(H,\varepsilon) \in \mathbb{N} \) such that for every
\( J \geq J(H,\varepsilon)\) there exists \( N(J,H,\varepsilon) \) such that for every \( N \geq N(J,H,\varepsilon) \)
we have for a set of \( (h_1,\ldots,h_k) \in \{1,\ldots,H\}^k \) of density which is at least \( 1- \delta \) the
following
\[
  \left \Vert \frac{1}{J} \sum_{j=1}^J a_{N+j} \prod_{\epsilon \in \{0,1\}^k}
  \xi(n - g_1(N+j) - \epsilon_1 h_1 - \ldots - \epsilon_k h_k)\right \Vert_{q(N)} < \varepsilon
\]
for every \( \{a_n \} \in \{0,1\}^{\mathbb{N}} \).
}

\noindent We recall that to a WM set \( A \) is associated the weakly-mixing system \( (X_{\xi},\mathbb{B},T,\mu)\), where \( \xi(n) = 1_A(n) - d(A) \).
We define
the function \( f \) on \( X_{\xi}\)  by the following rule:
 \( f(\omega) = \omega_0 \, , \, \omega = \{\omega_0,\ldots,\omega_n,\ldots\} \in X_{\xi}\). It is evident that \( f \)
is continuous and \( \int_{X_{\xi}} f(x) d\mu(x) = 0 \). By genericity of the point \( \xi \in X_{\xi} \) we get
\[
\frac{q(N)}{q(N)-g_1(N)}\left \Vert \frac{1}{J} \sum_{j=1}^J a_{N+j} \prod_{\epsilon \in \{0,1\}^k}
 \xi(n - g_1(N+j) - \epsilon_1 h_1 - \ldots - \epsilon_k h_k)\right \Vert_{q(N)}^2 \rightarrow_{N \rightarrow \infty}
\]
\begin{equation}
\label{wm_discr_cube}
  \int_{X_{\xi}}  \left( \frac{1}{J} \sum_{j=1}^J a_{N+J+1-j}  T^{c_1 j}
  \left( \prod_{\epsilon \in \{0,1\}^k} T^{\epsilon_1 h_1 +\ldots +\epsilon_k h_k} f(x) \right)\right)^2 d\mu(x).
\end{equation}
Denote by \( g_{h_1,\ldots,h_k} \) the following function on \( X_{\xi}\):
\[
  g_{h_1,\ldots,h_k}(x) = \prod_{\epsilon \in \{0,1\}^k} T^{\epsilon_1 h_1 + \ldots + \epsilon_k h_k} f(x) \, ,
  \, \forall x \in X_{\xi}.
\]
Then we use the following statement which can be viewed as a corollary of theorem \( 13.1 \) of Host and Kra in \cite{host-kra}
(\( \int_{X_{\xi}} f(x) d\mu(x) = 0 \)).

\noindent \textit{For every \( \varepsilon, \delta > 0 \) there exists \( H(\delta,\varepsilon) \in \mathbb{N} \)
 such that for every
\( H \geq H(\delta,\varepsilon) \) for a set of \( (h_1,\ldots,h_k) \in \{1,\ldots,H\}^k\) which has density at least
\( 1- \delta \) we have
\[
   \left| \int_{X_{\xi}} g_{h_1,\ldots,h_k}(x) d\mu(x) \right| < \varepsilon.
\]}

\noindent

\noindent Let \( \varepsilon, \delta  > 0 \). By the foregoing
statement there exists \( H(\delta,\varepsilon) \in \mathbb{N} \)
such that for every \( H \geq H(\delta,\varepsilon)  \) the set of
those \( (h_1,\ldots,h_k) \in \{1,\ldots,H\}^k \) such that
\[
  \left| \int_{X_{\xi}} g_{h_1,\ldots,h_k}(x) d\mu(x) \right| < \frac{\varepsilon}{4}
\]
has density at least \( 1 - \delta \).

\noindent For any fixed \( \{h_1,\ldots,h_k\}\) lemma \ref{wm_sub_lemma1} implies that there exists
\( J(\varepsilon) \in \mathbb{N} \) such that for every \( J \geq  J(\varepsilon)  \)
 we have
\[
  \left\Vert \frac{1}{J} \sum_{j=1}^J b_j T^{c_1 j} \left( g_{h_1,\ldots,h_k}(x) - \int_{X_{\xi}} g_{h_1,\ldots,h_k}(x) d\mu(x)\right) \right \Vert_{L^2(X_{\xi})} < \frac{\varepsilon}{4}
\]
for any sequence \( \{b_n\} \in \{0,1\}^{\mathbb{N}} \).

\noindent Therefore, by merging  the two last statements we conclude that
there exists \( H(\delta,\varepsilon) \in \mathbb{N} \) such that
for every \( H \geq H(\delta,\varepsilon) \)  there exists \(
J(H,\varepsilon) \in \mathbb{N} \) such that for every \( J \geq
J(H,\varepsilon) \)  and for a set of \( (h_1,\ldots,h_k) \in
\{1,\ldots,H\}^k \) which has density at least \( 1 - \delta \) we
have
\[
  \left\Vert \frac{1}{J} \sum_{j=1}^J b_j T^{c_1 j} g_{h_1,\ldots,h_k}(x)\right \Vert_{L^2(X_{\xi})} < \frac{\varepsilon}{2}
\]
for any sequence \( \{b_n\} \in \{0,1\}^{\mathbb{N}} \).

\noindent Finally, by use of (\ref{wm_discr_cube}), the fact that
\( \lim_{N \rightarrow \infty} \frac{q(N)}{q(N)-g_1(N)} > 0 \)  and the last
statement we deduce the validity of \( L(k;1) \).

 \hspace{12cm} \qed
\end{proof}

\noindent The next lemma is a simple consequence of the previous one
and is used in the next section to prove  theorem \ref{theorem3}.
\begin{lemma}
\label{main_lemma}
 Let \( A \subset \mathbb{N} \) be a WM set and \( p_1,\ldots,p_k \in \mathbb{Z}[n] \) are essentially distinct
 polynomials of the same degree \( d \geq 1\), with positive leading coefficients such
  that \( p_1(n) > p_i(n) , \, \forall 1 < i \leq k\) for sufficiently large \( n \). Then
for every \( \varepsilon > 0 \) there exists \( J(\varepsilon) \)
such that for every \( J \geq J(\varepsilon) \) there exists \(
N(J,\varepsilon) \) such that for every \( N \geq N(J,\varepsilon)
\) we have
\[
\left\Vert \frac{1}{J} \sum_{j=1}^J a_{N+j}\xi( p_1(N+j) - n) \xi(
p_2(N+j) - n) \ldots \xi(p_{k}(N+j)-n) \right\Vert_{p_1(N)} <
\varepsilon
\]
for every \( \{a_n\} \in \{0,1\}^{\mathbb{N}} \), where \( \xi(n) =
1_A(n) - d(A) \) for non-negative \( n \)'s and zero for \( n \leq 0 \).
\end{lemma}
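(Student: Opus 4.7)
The plan is to reduce Lemma \ref{main_lemma} to Lemma \ref{first_orth_lemma} by reflecting the summation variable of the inner norm. I set $m := p_1(N) + 1 - n$, which is a bijection of $\{1,\ldots,p_1(N)\}$ onto itself, so $\|\cdot\|_{p_1(N)}$ is invariant under this relabelling. A direct computation gives
\[
\xi\bigl(p_i(N+j)-n\bigr) \;=\; \xi\bigl(m - \tilde p_i(N+j)\bigr), \qquad \tilde p_i(x) \;:=\; p_1(N)+1-p_i(x),
\]
so that the target norm equals
\[
\left\| \frac{1}{J}\sum_{j=1}^{J}a_{N+j}\prod_{i=1}^{k}\xi\bigl(m-\tilde p_i(N+j)\bigr) \right\|_{p_1(N)},
\]
which is exactly the form controlled by Lemma \ref{first_orth_lemma} with reference polynomial $q := p_1$.

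Next I would verify the hypotheses of Lemma \ref{first_orth_lemma} for the family $\{\tilde p_i\}_{i=1}^{k}$ against the reference $q = p_1$. Essential distinctness is immediate, since $\tilde p_i - \tilde p_j = p_j - p_i$ is non-constant whenever $p_i$ and $p_j$ are essentially distinct. Each $\tilde p_i$ has degree $d = \deg p_1$, and its leading coefficient as a polynomial in $x$ is $-c_d^{(i)}$, where $c_d^{(i)} > 0$ is the leading coefficient of $p_i$; hence the required domination (leading coefficient of $q$ strictly exceeds that of $\tilde p_i$) reads $c_d^{(1)} > -c_d^{(i)}$, which is trivially true. The $N$-dependence of the constant term of $\tilde p_i$ is harmless, since additive constants do not affect either essential distinctness or leading-coefficient comparisons.

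The main subtlety --- and the only substantive point --- is that Lemma \ref{first_orth_lemma} was stated under the convention that the summed polynomials themselves carry positive leading coefficients, whereas each $\tilde p_i$ has a negative leading coefficient. I would handle this by observing that the PET-style induction proving Lemma \ref{first_orth_lemma} uses only essential distinctness of the family together with the degree/leading-coefficient comparison against $q$; the signs of the individual summed polynomials never enter the algebra of the induction step (all the polynomial manipulations proceed through differences $r_t(n)$, which are determined only by essential distinctness), and the base case $L(k;1)$ relies on the Host--Kra discrete-cube estimate together with $L^2$-convergence of averages of the form $\frac{1}{J}\sum_j T^{c_1 j}g$, both of which are valid for $c_1$ of either sign since $(X_\xi, T^{c_1})$ is weakly mixing regardless of the sign of $c_1$. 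Granting this remark, the bound delivered by Lemma \ref{first_orth_lemma} applies verbatim to the reflected sum and concludes the proof.
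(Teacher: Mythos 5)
Your reflection $m = p_1(N)+1-n$ is a clean bijection of $\{1,\ldots,p_1(N)\}$ onto itself, and the identity $\xi(p_i(N+j)-n) = \xi\bigl(m - \tilde p_i(N+j)\bigr)$ is correct, but the reduction to Lemma \ref{first_orth_lemma} is not licensed by that lemma's hypotheses: your $\tilde p_i$'s have \emph{negative} leading coefficients, while the lemma is stated for polynomials with positive ones. The remark you offer in lieu of verifying this --- that the PET induction ``never uses the signs'' --- is where the gap lies, and it is in fact false at the base case $L(k;1)$. There the proof passes, via genericity of $\xi$ in the one-sided shift system $(X_\xi, T)$ on $\{0,1\}^\N$, to $L^2(X_\xi,\mu)$-averages of the form $\frac{1}{J}\sum_j T^{c_1 j}(\cdots)$ with $c_1 > 0$; if the linear polynomial has $c_1 < 0$, the iterate $T^{c_1}$ simply does not exist, since $T$ is not invertible. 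Lemma \ref{wm_sub_lemma1} is likewise a statement about positive powers of $T$. So ``$(X_\xi, T^{c_1})$ is weakly mixing regardless of the sign of $c_1$'' is not an available fact in the paper's framework. A second, milder issue is that each $\tilde p_i$ has constant term $p_1(N)+1$, so the family is not fixed but varies with $N$; the quantifier structure of Lemma \ref{first_orth_lemma} (``there exists $N(J,\varepsilon)$'') presupposes a fixed family, and simply declaring the $N$-dependence ``harmless'' is an assertion, not a proof.

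The paper avoids both problems by running one explicit van der Corput step before invoking Lemma \ref{first_orth_lemma}: it forms $\frac1J\sum_j\langle u_j,u_{j+h}\rangle_{p_1(N)}$ where $u_j(n)=a_{N+j}\prod_i\xi(p_i(N+j)-n)$, and inside that correlation performs the $j$-dependent change of variable $n\mapsto p_1(N+j)-n$. The shift polynomials that survive are honest differences such as $p_1-p_i$ and $p_1(n)-p_i(n+h)$, which have \emph{positive} leading coefficients (since $p_1$ dominates), carry no $N$-dependence, and are dominated by $q=p_1$; Lemma \ref{first_orth_lemma} then applies exactly as stated, and Cauchy--Schwarz together with the finitary van der Corput lemma conclude. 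To salvage your route you would have to carry out essentially this first reduction step yourself --- at which point the two proofs coincide.
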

\begin{proof}
For a family of polynomials \( F = \{p_1,\ldots,p_k\}\) with a
maximal degree \( d \) denote by \( n_d \) the number
of different leading coefficients of polynomials of degree \( d \) from the family \( F \).
%
%

\noindent  As in the proof of lemma \ref{first_orth_lemma} we fix one of the groups of polynomials of degree \( d \) (all polynomials in the same group have the same
 leading coefficient). Assume that the group \(\{ g_1,\ldots,g_m \}\) has the maximal leading coefficient among all polynomials
 \( p_1,\ldots,p_k \). The rest of the polynomials we denote by \( q_1,\ldots,q_l \). Without loss of generality assume that
 \( p_1 = g_1, \ldots, p_m = g_m \).
Denote by
\( u_j(n) \, \, , \, \, 1 \leq n \leq p_1(N) \) the following
expression
\[
u_j(n) = a_{N+j}\xi( p_1(N+j) - n) \xi( p_2(N+j) - n) \ldots
\xi(p_{k}(N+j)-n).
\]
For \( u_j \)'s  we get
 \[
\frac{1}{J} \sum_{j=1}^J <u_j,u_{j+h}>_{p_1(N)} = \frac{1}{p_1(N)}
\sum_{n=1}^{p_1(N)} \frac{1}{J} \sum_{j=1}^J a_{N+j}\xi( p_1(N+j) -
n) \ldots \]
\[
\xi(p_{k}(N+j)-n) a_{N+j+h}\xi( p_1(N+j+h) - n) \ldots
\xi(p_{k}(N+j+h)-n) =
\]
\[
 \frac{1}{p_1(N)}  \sum_{n=1}^{p_1(N)} \xi(n) \frac{1}{J} \sum_{j=1}^J b_{N+j} \prod_{i=1}^{m-1} \xi(n-( p_1(N+j) - p_{i+1}(N+j)))
\]
\[
 \prod_{i=1}^{l}\xi(n - ( p_1(N+j) - q_i(N+j))) \prod_{i=1}^m  \xi(n-(p_1(N+j) - p_{i}(N+j+h)))
 \]
 \[ \prod_{i=1}^{l}\xi(n - (p_1(N+j) - q_i(N+j+h)))
 +\delta_{J,N},
\]
where \( b_{n} = a_n a_{n+h} \) and \( \delta_{J,N} \rightarrow_{\frac{J}{N} \rightarrow 0} 0 \).

\noindent Denote by \( r_i(n) =  p_1(n) - q_i(n) \, ; \, s_i(n) =  p_1(n) - q_i(n+h)\, , \, i: 1 \leq i \leq l \) and
\( t_i(n)  = p_1(n) - p_i(n) \, ; \, f_i(n) = p_1(n) - p_i(n+h) \, , \, i: 1 \leq i \leq m \). Then for all
 but a finite number of \( h \)'s  the polynomials
 \newline
 \( \tilde{F} \doteq \{r_1,\ldots,r_l,s_1,\ldots,s_l,t_2,\ldots,t_m,f_1,\ldots,f_m \} \)
 are essentially distinct and \(p_1\) is increasing faster than any polynomial in \( \tilde{F}\).
 Therefore by lemma \ref{first_orth_lemma} for all but a finite number of \( h \)'s
 the following expression is as small as we wish for appropriately chosen \( J,N \).
\[
  \Vert \frac{1}{J} \sum_{j=1}^J b_{N+j} \prod_{i=1}^{m-1} \xi(n-t_{i+1}(N+j))  \prod_{i=1}^{l}\xi(n - r_i(N+j))
  \]
  \[
  \prod_{i=1}^m  \xi(n-f_i(N+j)) \prod_{i=1}^{l}\xi(n - s_i(N+j)) \Vert_{p_1(N)}.
\]
Finally by Cauchy-Schwartz inequality and van der Corput's lemma
we get the desired conclusion.

\hspace{12cm} \qed
\end{proof}

\section{Proof of theorem \ref{theorem3} 
}\label{subsect_proof_suff_theorem_main_thm}


\textbf{Proof of  theorem \ref{theorem3}.}


\noindent Assume we have an arbitrary WM set \( A \) and \( k \)
essentially distinct polynomials \( p_1,\ldots,p_k \in
\mathbb{Z}[n] \)  of the same degree \( d \geq 1\) with  positive
leading coefficients and assume that for sufficiently large \( n
\)'s we have \( p_1(n) > p_i(n) \, , \, \forall i: \,  2 \leq i
\leq k \). Let's define the set \( F \) of all \( z \)'s where the
statement of the theorem fails, namely,
\[
F \risingdotseq \{ z \in \mathbb{N} \, | \, for \,\, any \,\,
(x,y_1,\ldots,y_k) \in A^{k+1} \,\, the \,\, system
\,\,\text{(\ref{additive_system})} \,\, fails\,\, to \,\,
hold\}.
\]
We shall prove that \( d^{*}(F) = 0 \). Since \( d(A) > 0 \) we
can find \( z \in A, z \not \in F \) and this will yield a
solution to (\ref{additive_system}).

\noindent Denote by \( \{a_n\} \) the indicator sequence of \( F
\), i.e., \( a_n = 1_F(n) \). We define the sequence \( \xi \) to
be a normalized indicator sequence of \( A \): \( \xi(n) = 1_A(n)
- d(A) \, , \, n \in \mathbb{N} \) and zero for non-positive values of \( n \),
 where \( d(A) \) is the density of \( A \) which
exists.
\newline
We define the expression \(B_{N,J} \) to be
\begin{equation}
\label{def_B_N_J}
B_{N,J} \risingdotseq \frac{1}{p_1(N)} \sum_{n=1}^{p_1(N)}
\frac{1}{J} \sum_{j=1}^J a_{N+j} 1_A(n) 1_A(p_1(N+j) - n)
\end{equation}
\[
1_A(p_2(N+j) - n) \ldots 1_A(p_{k-1}(N+j)-n) \xi(p_k(N+j)-n).
\]

\noindent     Suppose that we have \( d^{*}(F) > 0\). Then there
exist intervals \( I_{l,J} = [u_{l,J}+1,u_{l,J} + J] \) (for \( J
\) big enough) such that \( u_{l,J} \rightarrow _{l \rightarrow
\infty} \infty \) and \( \frac{|F \cap I_{l,J}|}{J}
> \frac{d^{*}(F)}{2} \) for every \(l\) and \(J\) big enough.
By induction on \( k \) and \( i \) we  prove the validity of the following
claim.

\noindent \textbf{Claim 1:} \textit{For every \(i \, : \, 0 \leq i
\leq k-1\) and every \( \varepsilon > 0 \) there exist \(J,l \) big
enough such that
\[
|\frac{1}{p_1(u_{l,J})} \sum_{n=1}^{p_1(u_{l,J})} \frac{1}{J}
\sum_{j=1}^J b_{u_{l,J}+j} 1_A(n) 1_A(p_1(u_{l,J}+j) - n)\ldots
\]
\[
1_A(p_i(u_{l,J}+j) - n) \xi(p_{i+1}(u_{l,J}+j) - n)
 \ldots \xi(p_{k}(u_{l,J}+j)-n) | <
 \varepsilon
\]
for every \(\{0,1\}\)-valued sequence \(\{b_n\}\). }

\noindent A proof of claim \( 1 \) is by induction on \( i \) and \( k \).
\newline
In the sequel we use the notation \( \left<1_A,f(n)\right>_N \), where \( f(n) \) is defined for all
\( n=1,2,\ldots,N \); which has the same meaning
as \(\left <1_A,f\right>_N = \frac{1}{N} \sum_{n=1}^N 1_A(n) f(n)\).
\newline
For \( i = 0 \) and every \( k \) the statement is exactly of lemma \ref{main_lemma}. For every
\( i < k-1 \) we
will prove the statement of the claim for \( i +1 \) and \( k \) provided
the statement for \( i \) and \( k \), and for \( i \), \( k - 1\):
\[
|\frac{1}{p_1(u_{l,J})} \sum_{n=1}^{p_1(u_{l,J})} \frac{1}{J}
\sum_{j=1}^J b_{u_{l,J}+j} 1_A(n) 1_A(p_1(u_{l,J}+j) - n)\ldots
\]
\[
1_A(p_i(u_{l,J}+j) - n) 1_A(p_{i+1}(u_{l,J}+j) - n)
\xi(p_{i+2}(u_{l,J}+j) - n) \ldots \xi(p_{k}(u_{l,J}+j)-n)
| =
\]
\[
| <1_A,\frac{1}{J} \sum_{j=1}^J b_{u_{l,J}+j}
1_A(p_1(u_{l,J}+j) - n) \ldots
\]
\[1_A(p_i(u_{l,J}+j) - n)
(\xi(p_{i+1}(u_{l,J}+j) - n) + d(A)) \xi(p_{i+2}(u_{l,J}+j) - n)
\ldots
\]
\[\xi(p_{k}(u_{l,J}+j)-n)
>_{p_1(u_{l,J})}| \leq
\]
\[
| <1_A,\frac{1}{J} \sum_{j=1}^J b_{u_{l,J}+j}
1_A(p_1(u_{l,J}+j) - n) \ldots
\]
\[1_A(p_i(u_{l,J}+j) - n)
\xi(p_{i+1}(u_{l,J}+j) \xi(p_{i+2}(u_{l,J}+j) - n)
\ldots
\]
\[\xi(p_{k}(u_{l,J}+j)-n)
>_{p_1(u_{l,J})}| +
\]
\[
d(A) | <1_A,\frac{1}{J} \sum_{j=1}^J b_{u_{l,J}+j}
1_A(p_1(u_{l,J}+j) - n) \ldots
\]
\[1_A(p_i(u_{l,J}+j) - n)
 \xi(p_{i+2}(u_{l,J}+j) - n)
\ldots
\]
\[\xi(p_{k}(u_{l,J}+j)-n)
>_{p_1(u_{l,J})}| < \varepsilon,
\]
for big enough \(J,l\).
The first summand is small by the statement of the claim for \( i \) and \( k \), and
the second summand is small by the statement of the claim for \( i \) and \( k -1 \).
This ends the proof of claim \( 1 \).

\noindent  We will use the statement of claim \( 1 \) for \( i = k-1 \) and we call the statement claim \( 2 \).

\noindent \textbf{Claim 2:} \textit{For every \(\varepsilon > 0 \)
there exist \( J,l \) big enough such that  the expression
\[
|\frac{1}{p_1(u_{l,J})} \sum_{n=1}^{p_1(u_{l,J})} \frac{1}{J}
\sum_{j=1}^J b_{u_{l,J}+j} 1_A(n) 1_A(p_1(u_{l,J}+j) - n) \ldots\]
\[
  1_A(p_{k-1}(u_{l,J}+j)-n) \xi(p_k(u_{l,J}+j)-n) | <
 \varepsilon
\]
for every \{0,1\}-valued sequence \(\{b_n\}\). }

\noindent The next statement enables us to conclude about a boundedness away from zero
of \( B_{u_{l,J},J} \).

\noindent \textbf{Claim 3:} \textit{For every \( \delta > 0 \)
for big enough \( J,l \)  the expression
\[
\frac{1}{p_1(u_{l,J})} \sum_{n=1}^{p_1(u_{l,J})} \frac{1}{J}
\sum_{j=1}^J b_{u_{l,J}+j} 1_A(n) 1_A(p_1(u_{l,J}+j) - n)
 \ldots 1_A(p_{k}(u_{l,J}+j)-n)
\]
is bigger than \( c (1 - \delta) d^{k+1}(A) \frac{d^{*}(F)}{3} \),
where \( c = \min_{2 \leq i \leq k-1} \frac{c_i}{c_1} \) (\( c_i
\) is a leading coefficient of polynomial \( p_i \)) for every
\(\{0,1\}\)-valued sequence \(\{b_n\}\) which has density bigger than
\(\frac{d^{*}(F)}{2} \) on all intervals \( I_{l,J}\).}

\noindent  The proof is by induction on \( k \).

\noindent For \( k = 1 \)  by using lemma \ref{main_lemma} we
have that for \( J \) and \( l \) big enough
\[
 \frac{1}{p_1(u_{l,J})} \sum_{n=1}^{p_1(u_{l,J})} \frac{1}{J}
\sum_{j=1}^J b_{u_{l,J}+j} 1_A(n) 1_A(p_1(u_{l,J}+j) - n) =
\]
\[
<1_A,\frac{1}{J} \sum_{j=1}^J b_{u_{l,J}+j}(\xi(p_1(u_{l,J}+j) -
n) + d(A))>_{p_1(u_{l,J})} \,\, \geq
\]
\[ - \varepsilon +
d(A)<1_A,\frac{1}{J} \sum_{j=1}^J b_{u_{l,J}+j}>_{p_1(u_{l,J})} \,\,
> (1-\delta) d(A)^2 \frac{d^{*}(F)}{3}.\]

\noindent
Assume the statement of the claim holds for \( k \).  Let
\((p_1,\ldots,p_{k},p_{k+1})\) be polynomials of the same degree such that \(p_1 \) is the
 ``biggest" among them (see conditions of lemma \ref{main_lemma}).
Without loss of generality we can assume that \( \min_{2 \leq i
\leq k+1} {c_i} = c_{k+1}\).  Then
for sufficiently large \( J\) and \(l\)
\[
\frac{1}{p_1(u_{l,J})} \sum_{n=1}^{p_1(u_{l,J})} \frac{1}{J}
\sum_{j=1}^J b_{u_{l,J}+j} 1_A(n) 1_A(p_1(u_{l,J}+j) - n) \ldots
\]
\[
1_A(p_k(u_{l,J}+j) - n) 1_A(p_{k+1}(u_{l,J}+j) - n) =
\]
\[
<1_A,\frac{1}{J} \sum_{j=1}^J b_{u_{l,J}+j} 1_A(p_1(u_{l,J}+j) -
n) \ldots
\]
\[ 1_A(p_k(u_{l,J}+j) - n) (\xi(p_{k+1}(u_{l,J}+j) - n) +
d(A)) >_{p_1(u_{l,J})}  -
\]
\[
d(A) \frac{1}{p_1(u_{l,J})} \sum_{n= p_{k+1}(u_{l,J})}^{p_1(u_{l,J})}
1_A(n) \frac{1}{J} \sum_{j=1}^J b_{u_{l,J}+j} 1_A(p_1(u_{l,J}+j) -  n) \ldots
  1_A(p_k(u_{l,J}+j) - n) =
\]
\[
d(A)<1_A,\frac{1}{J} \sum_{j=1}^J b_{u_{l,J}+j} 1_A(p_1(u_{l,J}+j)
- n) \ldots 1_A(p_k(u_{l,J}+j) - n)>_{p_1(u_{l,J})}+
\]
\[
<1_A,\frac{1}{J} \sum_{j=1}^J b_{u_{l,J}+j}1_A(p_1(u_{l,J}+j) - n)
\ldots 1_A(p_k(u_{l,J}+j) - n) \xi(p_{k+1}(u_{l,J}+j) -
n)>_{p_1(u_{l,J})}  -
\]
\[
 d(A) \frac{1}{p_1(u_{l,J})} \sum_{n= p_{k+1}(u_{l,J})}^{p_1(u_{l,J})}
1_A(n) \frac{1}{J} \sum_{j=1}^J b_{u_{l,J}+j} 1_A(p_1(u_{l,J}+j) -  n) \ldots
  1_A(p_k(u_{l,J}+j) - n)
\]
\[
  >
\]
\[
   d(A) \frac{1}{p_1(u_{l,J})} \sum_{n= 1}^{p_{k+1}(u_{l,J})-1}
1_A(n) \frac{1}{J} \sum_{j=1}^J b_{u_{l,J}+j} 1_A(p_1(u_{l,J}+j) -  n) \ldots
  1_A(p_k(u_{l,J}+j) - n)
   - \varepsilon
\]
\[
  > d(A)c(1-\delta')d(A)^{k+1}\frac{d^*(F)}{3}
\]
\[
  > c(1 - \delta)d(A)^{k+2}\frac{d^{*}(F)}{3}.
\]
We used claim \( 2 \) in the first inequality and induction hypothesis in
the second inequality. This ends the proof of claim \( 3 \).

%

 \noindent  By the
definition of \( F \)  it follows that for every non-zero value of
\[
a_{u_{l,J}+j} 1_A(n) 1_A(p_1(u_{l,J}+j) - n) 1_A(p_2(u_{l,J}+j) -
n) \ldots 1_A(p_{k-1}(u_{l,J}+j)-n) \]
 (thus it equals to one), the remaining factor in the summands of \( B_{u_{l,J},J} \)
 is negative, namely, \( \xi(p_k(u_{l,J}+j)-n) = -d(A)
\). Therefore, by using claim \( 3 \) we get \( |B_{u_{l,J},J}| \geq c (1 - \varepsilon) d^{k+1}(A)
\frac{d^{*}(F)}{3} \) for any \( l \) and for \( J \) big
enough. Thus \( |B_{u_{l,J},J}| \) is bounded from zero.
\newline
On the other hand, by claim \( 2 \)  it follows
that for any \( \varepsilon > 0 \) there exists \( J= J(\varepsilon)
\) and \( N = N(J(\varepsilon))\) such that \( |B_{N,J}| <
\varepsilon \). Therefore we get a contradiction.

\noindent    We have proved that the set of all \( z\)'s such that the
system (\ref{additive_system}) is solvable within \( A^{k+1}\) (\( z \) is not necessarily in \( A\)) has a lower density one. Therefore it
intersects every set of positive density (even of positive upper
density), in particular, \(A\).

\hspace{12cm} \qed

\section{Appendix}\label{appendix}
\numberwithin{lemma}{section} \numberwithin{theorem}{section}
\numberwithin{prop}{section} \numberwithin{remark}{section}
\numberwithin{definition}{section} \numberwithin{corollary}{section}

\begin{lemma}\( \rm( \)\textit{van der Corput}\( \rm) \)
\label{vdrCorput}
 Suppose \(\varepsilon
>0 \) and \( \{u_{j}\}_{j=1}^{\infty } \) is a family
of vectors in  Hilbert space, such that \( \Vert u_j \Vert \leq 1 \,
\rm( 1 \leq j \leq \infty \rm)  \). Then there exists \(
I'(\varepsilon) \in \mathbb{N} \), such that for every \( I \geq
I'(\varepsilon) \) there exists \( J'(I,\varepsilon) \in \mathbb{N}
\), such that the following holds:
\newline
For \( J \geq J'(I,\varepsilon) \) for which we obtain
\[
\left| \frac{1}{J}\sum ^{J}_{j=1}<u_{j}, u_{j+i}>\right| <
\frac{\varepsilon}{2}, \]  for set of \( i \)'s in the interval \(
\{1,\ldots,I\} \) of density \( 1 - \frac{\varepsilon}{3}\) we have
\[
\left\Vert \frac{1}{J}\sum _{j=1}^{J}u_{j}\right\Vert < \varepsilon.
\]
\end{lemma}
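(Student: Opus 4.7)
My plan is to execute the standard finitary van der Corput shift argument. The key preliminary identity is that, for any $I \leq J$,
\[
\frac{1}{J}\sum_{j=1}^J u_j \;=\; \frac{1}{IJ}\sum_{j=1}^J \sum_{i=0}^{I-1} u_{j+i} \;+\; E_1,
\]
where $\|E_1\| = O(I/J)$ is a boundary correction coming from re-indexing the double sum. Choosing $J$ much larger than $I$ will make $E_1$ negligible, so it suffices to bound the shifted double average.

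Applying Cauchy--Schwarz to pull the outer norm inside one of the averages gives
\[
\Bigl\| \frac{1}{IJ}\sum_{j=1}^J \sum_{i=0}^{I-1} u_{j+i} \Bigr\|^2 \;\leq\; \frac{1}{J}\sum_{j=1}^J \Bigl\| \frac{1}{I}\sum_{i=0}^{I-1} u_{j+i} \Bigr\|^2 \;=\; \frac{1}{I^2}\sum_{i_1,i_2=0}^{I-1} \frac{1}{J}\sum_{j=1}^J \langle u_{j+i_1}, u_{j+i_2}\rangle.
\]
The diagonal contributions $i_1=i_2$ give at most $1/I$ in total since $\|u_j\|\leq 1$. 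For the off-diagonal pairs, a translation $j \mapsto j - \min(i_1,i_2)$ (again at the cost of an $O(I/J)$ boundary error) reduces each such average to one of the form $\frac{1}{J}\sum_j \langle u_j, u_{j+i}\rangle$ with $i = |i_2 - i_1|$, and grouping by the value of $i$ yields
\[
\Bigl\| \tfrac{1}{J}\sum_{j=1}^J u_j\Bigr\|^2 \;\leq\; \frac{2}{I^2}\sum_{i=1}^{I-1}(I-i)\,\Bigl|\frac{1}{J}\sum_{j=1}^J \langle u_j, u_{j+i}\rangle\Bigr| \;+\; O(1/I) + O(I/J).
\]

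At this point I would split the sum over $i$ using the hypothesis: on the set $G \subset \{1,\ldots,I\}$ of density at least $1 - \varepsilon/3$ the inner-product average is below $\varepsilon/2$, whereas on the exceptional set of density at most $\varepsilon/3$ I would invoke the trivial Cauchy--Schwarz bound $|\langle u_j, u_{j+i}\rangle| \leq 1$. One first chooses $I = I'(\varepsilon)$ large enough that the diagonal $O(1/I)$ term is below a prescribed fraction of $\varepsilon$, and then $J = J'(I,\varepsilon)$ large enough that the two $O(I/J)$ boundary errors are similarly absorbed. Combining all contributions yields the stated norm bound.

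The main obstacle is not conceptual but careful bookkeeping of the three error sources --- the two boundary $O(I/J)$ terms introduced by the re-indexings, the $O(1/I)$ diagonal contribution, and the $O(\varepsilon)$ contribution from the exceptional $i$'s --- so that, with the thresholds $\varepsilon/2$ and density $1-\varepsilon/3$ calibrated as in the statement, the final bound lands below the prescribed $\varepsilon$. This explains the nested quantifier structure: $I$ must be chosen in terms of $\varepsilon$ alone, and only then may $J$ be chosen in terms of both.
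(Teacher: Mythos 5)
Your strategy is the standard finitary van der Corput argument (the shift--average trick plus Cauchy--Schwarz), which is indeed what one finds in \cite{fish2}, Lemma~5.4; the re-indexing identity, the $O(I/J)$ boundary corrections, and the split of the off-diagonal sum into good and bad $i$'s are all correctly identified. The problem is exactly in the place you wave off as ``careful bookkeeping''. Carrying your own estimates through, one gets
\[
\Bigl\| \tfrac{1}{J}\sum_{j=1}^J u_j \Bigr\|^2
\;\leq\; \frac{2}{I^2}\sum_{i\in G}(I-i)\cdot\frac{\varepsilon}{2}
\;+\; \frac{2}{I^2}\sum_{i\notin G}(I-i)\cdot 1
\;+\; O(1/I) + O(I/J),
\]
and since $\sum_{i=1}^{I-1}(I-i) \approx I^2/2$ and $|\{1,\ldots,I\}\setminus G|\leq \varepsilon I/3$, the first term is at most $\varepsilon/2$ and the second at most $2\varepsilon/3$. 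So after choosing $I$ and then $J$ large, the cleanest bound you can extract is $\|\tfrac{1}{J}\sum u_j\|^2 \lesssim \tfrac{7}{6}\varepsilon$, i.e.\ $\|\tfrac{1}{J}\sum u_j\| \lesssim \sqrt{\varepsilon}$, \emph{not} $< \varepsilon$. For small $\varepsilon$ this is an order-of-magnitude miss: you are bounding the square of the norm, so a hypothesis at scale $\varepsilon$ delivers a conclusion at scale $\sqrt{\varepsilon}$. To land at the conclusion $\|\cdot\|<\varepsilon$ one needs the hypothesis calibrated at the $\varepsilon^2$ scale (say, inner-product averages below $\varepsilon^2/4$ on a set of density $\geq 1-\varepsilon^2/4$, with $1/I$ and $I/J$ absorbed into the remaining $\varepsilon^2/2$); alternatively, one keeps the hypothesis as stated and accepts the weaker conclusion $\|\cdot\|< C\sqrt{\varepsilon}$. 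Either fix is fine for all the applications in the paper (which only need the norm to tend to $0$), but as written your proposal asserts that the stated thresholds ``land below the prescribed $\varepsilon$'' when they in fact do not; you should either redo the arithmetic with the tighter thresholds or flag explicitly that the lemma's constants need rescaling.
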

 This is a finitary modification of
Bergelson's lemma in \cite{berg_pet}. Its proof may be found in
 \cite{fish2}, lemma 5.4.

\noindent The following lemma is a simple fact that for a weakly mixing
system \( X \) not only an average of shifts for a function converges
to a constant in \( L^2 \) norm but also  weighted average (weights
are bounded) converges to the same constant.
\begin{lemma}
\label{wm_sub_lemma1} Let \( ( X, \mathbb{B}, \mu, T ) \) be a
weakly mixing system  and \( f \in L^{2}(X) \) with \( \int_X f d\mu =
0 \). Let \( \varepsilon > 0 \). Then there exists \( \mathbb{J}
> 0 \) such that for any \( J
> \mathbb{J} \) we have
\[
 \left\| \frac{1}{J} \sum_{j=1}^J b_j T^{j} f \right\|_{L^2(X)} <
 \varepsilon
\]
for any sequence \(  b = (b_1,b_2, \ldots , b_n, \ldots) \in \{ 0 ,1
\}^\mathbb{N}  \).
\end{lemma}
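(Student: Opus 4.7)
The plan is to reduce the weighted-average estimate to the standard characterization of weak mixing by squaring and discarding the weights, which yields the uniformity in $b$ for free.

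First, I would expand the squared norm using that $T$ preserves the inner product on $L^2(X)$:
\[
\left\|\frac{1}{J}\sum_{j=1}^J b_j T^{j} f\right\|_{L^2(X)}^2 = \frac{1}{J^2}\sum_{i,j=1}^J b_i b_j \langle T^i f, T^j f\rangle = \frac{1}{J^2}\sum_{i,j=1}^J b_i b_j \langle f, T^{j-i} f\rangle.
\]
Since $b_i, b_j \in \{0,1\}$, each product $b_i b_j$ lies in $\{0,1\}$, so the crucial inequality
\[
\left\|\frac{1}{J}\sum_{j=1}^J b_j T^{j} f\right\|_{L^2(X)}^2 \leq \frac{1}{J^2}\sum_{i,j=1}^J |\langle f, T^{j-i} f\rangle|
\]
holds, and the right-hand side no longer depends on $b$.

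Next, I would reorganize the double sum by the difference $k = j - i$. There are $J - |k|$ pairs with a given difference $k \in \{-(J-1),\ldots,J-1\}$, giving
\[
\frac{1}{J^2}\sum_{k=-(J-1)}^{J-1} (J-|k|)\,|\langle f, T^k f\rangle| \;\leq\; \frac{\|f\|_{L^2}^2}{J} + \frac{2}{J}\sum_{k=1}^{J-1} |\langle f, T^k f\rangle|.
\]
(Here I used $|\langle f, T^{-k} f\rangle| = |\langle T^k f, f\rangle|$ and separated the $k=0$ contribution.)

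Finally, I would invoke the standard spectral characterization of weak mixing: for a weakly mixing system and any $f \in L^2(X)$ with $\int_X f\,d\mu = 0$, one has $\frac{1}{N}\sum_{k=1}^{N} |\langle f, T^k f\rangle| \to 0$ as $N \to \infty$. Combined with the trivial decay $\|f\|_{L^2}^2/J \to 0$, this shows that the right-hand side above is less than $\varepsilon^2$ for all $J$ larger than some $\mathbb{J}$ depending only on $f$ and $\varepsilon$, yielding the claim uniformly in the sequence $b$. I do not expect any real obstacle: the only conceptual point is recognizing that squaring makes the weights harmless (since $|b_i b_j| \leq 1$), after which the estimate becomes a standard Cesàro statement about the correlation sequence of $f$ under $T$.
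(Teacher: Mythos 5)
Your argument is correct and is essentially the paper's own proof: both expand the squared norm, bound the weights by $|b_ib_j|\le 1$ to remove $b$, collect terms by the difference $k=j-i$, and conclude from the weak-mixing characterization $\frac{1}{N}\sum_{k=1}^N |\langle f, T^k f\rangle| \to 0$. Your write-up is a bit more explicit about the pair-counting and the $k=0$ term, but there is no substantive difference in method.
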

\begin{proof}
Let \( \varepsilon > 0 \).
\newline
 By one of the properties of weak
mixing, for any \( f \in L^2(X) \) with \( \int_X f d \mu(x) = 0 \)
we have \( \frac{1}{N} \sum_{n=1}^N |\left<T^n f, f\right>| \rightarrow 0 \).
\newline
We denote by \( c_n = c_{(-n)}= |\left<T^n f, f \right>| \) and we have that \(
\frac{1}{N} \sum_{n=1}^N c_n \rightarrow 0 \). Then for any \(
\varepsilon > 0 \) there exists \( \mathbb{J} > 0 \) such that for
any \( J
> \mathbb{J} \) we have
\[
\left\| \frac{1}{J} \sum_{j=1}^J b_j T^{j} f \right\|^2 \leq
\frac{1}{J^2} \sum_{j=1,k=1}^J b_j b_k c_{j-k} \leq \frac{1}{J^2}
\sum_{j=1,k=1}^J c_{j-k} \leq \varepsilon.
\]

\hspace{12cm} \qed
\end{proof}


\newpage

\end{document}